\NewDocumentCommand{\INTERVALINNARDS}{ m m }{
	#1 {,} #2
}
\NewDocumentCommand{\interval}{ s m >{\SplitArgument{1}{,}}m m o }
{
	\IfBooleanTF{#1}{
		\left#2 \INTERVALINNARDS #3 \right#4
	}{
		\IfValueTF{#5}{
			#5{#2} \INTERVALINNARDS #3 #5{#4}
		}{
			#2 \INTERVALINNARDS #3 #4
		}
	}
}
\newtheorem{theorem}{Theorem}[section]
\newtheorem{corollary}{Corollary}[theorem]
\newtheorem{lemma}[theorem]{Lemma}
\newtheorem{obs}{Observation}
\newtheorem{Remark}{Remark}[section]
\begin{document} 
	\title{\textbf{On dynamics of the Chebyshev's method for quartic polynomials}}
	\author[1]{Tarakanta Nayak\footnote{Corresponding author, tnayak@iitbbs.ac.in}}
	\author[1]{Soumen Pal\footnote{sp58@iitbbs.ac.in}}
	\affil[1]{School of Basic Sciences, 
		Indian Institute of Technology Bhubaneswar, India}
	\date{}
	\maketitle
	\begin{abstract}
		Let $p$ be a normalized (monic and centered) quartic polynomial  with non-trivial symmetry groups. It is already known that if $p$ is unicritical, with only two distinct roots with the same multiplicity or having a root at the origin then the Julia set of its Chebyshev's method $C_p$ is connected and  symmetry groups of $p$ and $C_p$ coincide~[Nayak, T., and Pal, S., Symmetry and dynamics of Chebyshev's method, \cite{Sym-and-dyn}]. Every other quartic polynomial is shown to be of the form $p_a (z)=(z^2 -1)(z^2-a)$ where $a \in \mathbb{C}\setminus \{-1,0,1\}$. Some dynamical aspects of the Chebyshev's method $C_a$ of $p_a$ are investigated in this article for all real $a$. It is  proved that all the extraneous fixed points of $C _a$ are repelling which gives that there is no invariant Siegel disk for $C_a$. It is also shown that there is no Herman ring in the Fatou set of $C_a$. For positive $a$, it is proved that at least two immediate basins of $C_a$ corresponding to the roots of $p_a$ are unbounded and simply connected. For negative $a$, it is however proved that all the four immediate basins of $C_a$ corresponding to the roots of $p_a$ are unbounded and those corresponding to $\pm i\sqrt{|a|}$ are simply connected. 
	\end{abstract}
	\textit{Keyword:}
	Quartic polynomials; Fatou and Julia sets; Symmetry; Chebyshev's method.\\
	AMS Subject Classification: 37F10, 65H05
%=====================================================================
\section{Introduction}
A root-finding method is a function from the space of all polynomials that assigns a rational map $F_p$ to a polynomial $p$  such that each root of $p$ is an attracting fixed point of $F_p$, i.e., if $z_0$ is a root of $p$ then $F_p (z_0)=z_0$ and $|F_p'(z_0)|<1$. Though there are several such methods appearing in the literature, the family of K\"{o}nig's methods~\cite{BH2003} and Chebyshev-Halley methods~\cite{CCV} seem to be  comparatively well-studied among them. The Newton method $N_p$ is the first member of the  K\"{o}nig's methods and its order of convergence (i.e., the local degree of $N_p$ at each of the simple roots of $p$) is two. Further, it has no finite extraneous fixed point, i.e., each finite fixed point of $N_p$ is a root of $p$.  Note that the sequence  of forward iterates of every root-finding method converges to a root of a polynomial in a suitably small neighborhood of the root.  The non-existence of finite extraneous fixed points for the Newton method have been found to be crucial in the study of its global dynamics (i.e., not only in a neighborhood of the roots of the polynomial but in $\widehat{\mathbb{C}}$) of the Newton method. For example, this is precisely the reason why the Julia set (it is the set of all points in $\widehat{\mathbb{C}}$ at which the sequence of iterates $\{N_p ^{n}\}_{n >0}$ is not normal~\cite{Beardon_book}) of the Newton method applied to a polynomial is connected \cite{MS}. These are possibly some reasons for which  this method has drawn a good amount of attention of researchers. However there are root-finding methods whose order of convergence is three and which has finite extraneous fixed points. One such, namely the Chebyshev's method is the subject of this article.
\par
For a polynomial $p$, its Chebyshev's method is given by $$ C_{p}(z)=z-\left[1+\frac{1}{2}L_{p}(z)\right]\frac{p(z)}{p'(z)},
 $$
where $ L_{p}(z)=\frac{p(z)p''(z)}{[p'(z)]^{2}}.$

\par 
The Fatou set of a rational map $R$, denoted by $\mathcal{F}(R)$ is the set of all points in $\widehat{\mathbb{C}}$ in a neighborhood of which $\{R^n\}_{n>0}$ is normal. Its complement in $\widehat{\mathbb{C}}$ is known as the Julia set of $R$ and is denoted by $\mathcal{J}(R)$. A maximally connected open subset $U$ of the Fatou set, called a Fatou component is said to be $p-$periodic if $R^p (U)=U$. It is well-known that for every Fatou component $U$ of a rational map  $R$, there is a $k$ such that $R^{k}(U)$ is periodic. A periodic Fatou component can be an attracting domain, a parabolic domain, a Siegel disk or a Herman ring. Other properties of these Fatou component can be found in \cite{Beardon_book}. We are concerned with the dynamics (the Fatou and the Julia sets) of the Chebyshev's method applied to polynomials. The nature of extraneous fixed points and some other  dynamical aspects of Chebyshev's method applied to quadratic and cubic polynomial have been studied in \cite{Olivo2015}. Existence of superattracting cycles for Chebyshev's method applied to cubic polynomial  are investigated in \cite{GV2020}. The first systematic study of the dynamics of  Chebyshev's method applied to cubic polynomials can be found in \cite{Nayak-Pal2022}. The family of Chebyshev's method applied to cubic polynomials is parametrized in terms of the multiplier of an extraneous fixed point and its dynamics is determined for parameters in $[-1,1]$. A discussion of the Chebyshev's method applied to some  quartic polynomials appeared in \cite{Sym-and-dyn} which mainly deals with the relation between the  group  of Euclidean isometries preserving the Julia set  of a polynomial and its Chebyshev's method, whenever the earlier is non-trivial.   It is shown that both these groups are isomorphic for all centered cubic and quartic polynomials with non-trivial symmetry groups.

This article deals with the quartic polynomials. First, we parametrize the family of maps arising as the Chebyshev's method of quartic polynomials. 
  \par   A root-finding method $F_p$ is said to satisfy the Scaling theorem (see \cite{Nayak-Pal2022}) if for every affine map $T$ and every non-zero constant $\lambda$, $F_p=T\circ F_q \circ T^{-1}$ where $q=\lambda p\circ T$. A quartic polynomial is of the form $p(z)=az^4+bz^3+cz^2+dz+e$
  where $a(\neq 0), b, c, d, e\in \mathbb{C}$. It is well-known that every polynomial $g$ can be transformed to a monic and centered (called normalized) polynomial  by post-composing an affine map $T$ and then multiplying by a suitable non-zero constant $\lambda$. Indeed, by taking $\lambda$ to be the reciprocal of the leading coefficient of $g$ and $T(z)=z+\zeta$ where $\zeta$ is the centroid of $g$ (see page 205, \cite{Beardon_book}), it can be seen that $\lambda g\circ T$ is normalized. As the Chebyshev's method satisfies the Scaling theorem (See Theorem 2.2, \cite{Nayak-Pal2022}), $C_g=T\circ C_{\lambda g\circ T}\circ T^{-1}$. Therefore, without loss of generality we assume that $p$ is normalized, i.e., $a=1$ and $b=0$. Then 
  \begin{equation}\label{qrt_norm}
  p(z)=z^4+cz^2+dz+e.
  \end{equation}
  Though the ongoing discussion is on polynomials, we define the symmetry group of the Julia set of a rational map $R$. It is   denoted by $\Sigma R$ and is defined as $\Sigma R=\{\sigma: \sigma \text{ is an holomorphic Euclidean isometry and }\sigma(\mathcal{J}(R))=\mathcal{J}(R)\}$.
  A normalized polynomial $p$ can be written as 
  \begin{equation}\label{norm_form}
  p(z)=z^\alpha p_0(z^\beta)
  \end{equation}
   where $p_0$ is a monic polynomial, $\alpha \in \mathbb{N}\cup \{0\}$ and $\beta\in \mathbb{N}$ are maximal. We call it normal form of $p$ and it is unique. Then by Theorem 9.5.4,~\cite{Beardon_book},
  $$\Sigma p=\{z\mapsto \lambda z:\lambda^\beta =1\}.$$ Note that  $\beta=1$ if and only if    $\Sigma p$ is trivial. 
\par    If $d=0$ then $p(z)=z^4+cz^2+e$. Further, if $c=0=e$ then $p$ is a monomial, whose Chebyshev's method is a linear map and this case is not of interest. As seen in the following   $\Sigma p$ is non-trivial in all other situations.
  \begin{enumerate}
  	\item[Case 1:]  If $c=0, e\neq 0$ then $p(z)=z^4+e$ and $ \beta=4$.
  	\item[Case 2:] If $c\neq 0, e=0$ then $p(z)=z^2(z^2+c)$ and $ \beta=2$.
  	\item[Case 3:] If $c\neq 0, e\neq 0$ then $p(z)=z^4+cz^2+e$ and $  \beta=2$.
  \end{enumerate}
For the quartic  polynomial $p$ of the form given in Equation \ref{qrt_norm}, let $d \neq 0$. If both $c$ and $e$ are non-zero then $p(z)=z^4+cz^2+dz+e$ is in its normal form with   $\alpha=0, \beta=1$  and $p_0(z)=p(z)$.  If $c \neq 0$ and $e=0$ then  $p$ in its normal form is $z(z^3+cz+d)$, and $\alpha=1=\beta$ and $p_0(z)=z^3+cz+d$. Similarly, if $c=0$ and $e\neq 0$ then $p(z)=z^4+dz+e$  is in  its normal form with $\alpha=0$ and $ \beta=1$.  On the other hand if both $c,e$ are zero then $ p(z)=z(z^3+d) $ and  $\alpha=1, \beta=3$. This is the only case for $d \neq 0$ where $\Sigma p$ is non-trivial.

\begin{enumerate}
	\item[Case 4:] $ p(z)=z(z^3+d) $, $\alpha=1$ and $\beta=3$ in this case.
\end{enumerate}
In this article, we attempt to understand the dynamics of $C_p$ for all quartic $p$ with non-trivial $\Sigma p$. The non-identity elements of $\Sigma p$ are used to understand the dynamics of $C_p$. This approach of determining the dynamics by the symmetries does not work when $\Sigma p$ is trivial. This is a reason why this case needs to be dealt with differently, which we postpone for our future work.

The polynomials in Case $1$ are unicritical. Those in Cases $2$ and $4$ have one of their roots at the origin (the centroid of $p$) and have non-trivial symmetry. The dynamics of  the Chebyshev's method applied to these polynomials and those in Case $3$ having exactly two distinct roots  follows from  Theorem 1.3, \cite{Sym-and-dyn}. In all these cases, the Julia set is found to be connected and $\Sigma p=\Sigma C_p$. Whether, these are true for the rest of the cases is not known.   

\par 
 The polynomials given in Case 3 can not have exactly three distinct roots because zero is not a root and the roots appear in pairs symmetric about the origin.
 Note that, in Case $3$, $p$ can be expressed as $p(z)=(z^2-\gamma_1)(z^2-\gamma_2)$, where $\gamma_1, \gamma_2\in \mathbb{C}\setminus \{0\}$. The Chebyshev's method of $\frac{1}{\gamma_1 ^2}p(\sqrt{\gamma_1}z) $ is conjugate to  $C_p$ by Scaling theorem. Therefore, assume without loss of generality that 
$p(z)=(z^2-1)(z^2-a)$ where $a :=\frac{\gamma_2}{\gamma_1} \in \mathbb{C} \setminus \{0\}$. If $a=-1$ then $p(z)=z^4-1$ is of the form as given in Case $1$; if $a=1$ then $p(z)=(z-1)^2(z+1)^2$ has exactly two roots with the same multiplicity, which is already dealt with in Theorem 1.3 (1)~of  \cite{Sym-and-dyn}. Hence it is sufficient to  consider   $p(z)=(z^2-1)(z^2-a)$ where $a\in \mathbb{C}\setminus\{-1,0,1\}$. Further,  if $|a|>1$ then consider the polynomial $\frac{1}{a^2}p(\sqrt{a}z)=(z^2-\frac{1}{a})(z^2-1)$, whose Chebyshev's method is conjugate to $C_p$ by the Scaling theorem.  
\par 
%For the polynomial $p$ in Equation (\ref{p}), $\Sigma p=\{I, z\mapsto -z\}$. Therefore by Theorem \cite{}, $\Sigma p\subseteq \Sigma C_p$. 
Therefore, it is enough to consider
\begin{equation}\label{p}
p_a(z)=(z^2-1)(z^2-a)
\end{equation}
where $|a| \leq 1 ~\mbox{and}~ a \notin \{-1,0,1\}$. We denote the Chebyshev's method of $p_a$ by $C_a$. 
 This paper takes up the case when $a$ is real. All  extraneous fixed points of $C_a$ are shown to be repelling. As a consequence, it follows that there is no invariant Siegel disk for $C_a$. This is because every such Siegel disk requires  an indifferent fixed point whereas all fixed points are found to be either attracting (when these are the roots of $p_a$) or repelling (where these are extraneous).  It is proved that every periodic Fatou component on which a multiply connected Fatou component lands is an attracting or parabolic domain corresponding to a real periodic point.  Here, we say a Fatou component $U'$ lands on a Fatou component $U$ if there is a $k \geq 0$ such that $C_a ^{k}(U') =U$. In particular this means that there is no Herman ring in the Fatou set of $C_a$. It is also found that the Julia set of $C_a$ is connected if and only if the Julia component containing a non-zero pole is unbounded.
 \par For $a>0,$ we have proved that the immediate basins of $1$ and $-1$ are unbounded and at least two immediate basins (out of four corresponding to the four roots of $p_a$) are simply connected. For $a<0$, all the immediate basins corresponding to the four roots of $p_a$ are found to be unbounded and those corresponding to the purely imaginary roots are shown to be simply connected. Under the assumption that the Fatou set is the union of the basins of attraction of the fixed points corresponding to the roots of $p_a$, it is shown that $\Sigma C_a =\Sigma p_a$ for all positive $a$. For negative $a$ this is proved with an additional assumption that the largest positive extraneous fixed point and the purely imaginary extraneous fixed points are with the same absolute value.
 \par Section 1 describes some useful properties of $C_a$. Results on fixed points and dynamics of $C_a$ are stated and proved  in the first part of Section 2. Then in the two subsections of Section 2, the main results are proved.

By conjugacy, we mean conformal conjugacy throughout this article. 
\section{Basic properties of $C_a$}
For the polynomial $p_a$ in Equation \ref{p},
$$L_{p_{a}}(z)=\frac{6z^6-7(a+1)z^4+(a^2+8a+1)z^2-a(a+1)}{2z^2\{2z^2-(a+1)\}^2},$$
$$1+\frac{1}{2}L_{p_a}(z)=\frac{Q(z)}{4z^2\{2z^2-(a+1)\}^2}$$ where $Q(z)=22z^6-23(a+1)z^4+(5a^2+16a+5)z^2-a(a+1)$,
and $$L_{p_a'}(z)=\frac{12z^2\{2z^2-(a+1)\}}{\{6z^2-(a+1)\}^2}.$$
The Chebyshev's method $C_a$ of $p_a$ is 
\begin{align}
C_a(z)&=z-\left[1+\frac{1}{2}L_{p_a}(z)\right]\frac{p_a(z)}{p'_a(z)}\nonumber\\&=z-\frac{Q(z)p_a(z)}{8z^3\{2z^2-(a+1)\}^3}\label{Chebyshev}\\
&=
\frac{42z^{10}+Az^8+Bz^6+Cz^4+Dz^2+a^2(a+1)}{8z^3\{2z^2-(a+1)\}^3}
\label{Cheby-form}
\end{align}
where $A=-51(a+1)$, $B=4(5a^2+3a+5)$, $C=-3(a^3-7a^2-7a+1)$ and $D=-6a(a^2+3a+1)$.
\par 
	Now, we enumerate  some basic properties of $C_a$. 
\begin{lemma}

%  The Chebyshev's method $C_a$ of $p_a$ satisfies the following properties.
  \begin{enumerate}
  	\item(Degree) The map $C_a$ is an odd rational map of degree ten.  
  	\item(Critical points) The map $C_a$ has eighteen  critical points, counting with multiplicities. The multiple critical points are the four (simple) roots of $p_a$ and three poles of $C_a$. Each of these has multiplicity two as a critical point of $C_a$.  The other four critical points  are simple. If $a$ is real then these four critical points are neither real nor purely imaginary and the poles are all real.
  	\item(Fixed points) The roots of $p_a$ are the superattracting fixed points of $C_a$. The point at infinity is a repelling fixed point of $C_a$ with multiplier $\frac{32}{21}$. The extraneous fixed points are the solutions of $Q(z)=22z^6-23(a+1)z^4+(5a^2+16a+5)z^2-a(a+1)$.
  \end{enumerate}	
\label{basic-cheby}
\end{lemma}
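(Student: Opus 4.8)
For part (1), the oddness is structural rather than computational: since $p_a$ is even, $p_a'$ is odd and $p_a''$ is even, so $L_{p_a}$ is even and $p_a/p_a'$ is odd; hence $\left[1+\tfrac12 L_{p_a}\right]\tfrac{p_a}{p_a'}$ is odd and $C_a(z)=z-(\text{odd})$ is odd, which also explains why only even powers appear in the numerator of Equation~\eqref{Cheby-form}. For the degree, the numerator in Equation~\eqref{Cheby-form} has degree $10$ with leading coefficient $42$ and the denominator has degree $9$ with leading coefficient $64$, so it suffices to rule out common factors, i.e. to show that the three zeros $0,\pm\sqrt{(a+1)/2}$ of the denominator are genuine poles. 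These are precisely the critical points of $p_a$ (the zeros of $p_a'(z)=2z(2z^2-(a+1))$), and since $p_a(0)=a\neq0$ and $p_a\!\left(\pm\sqrt{(a+1)/2}\right)=-\tfrac14(a-1)^2\neq0$, the term $p_a/p_a'$ blows up at each of them, so $C_a=\infty$ there. Thus no factor cancels and $\deg C_a=10$.

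The key tool for parts (2) and (3) is the derivative identity
\begin{equation}\label{Cprime}
C_p'(z)=\frac{p(z)^2\bigl[3p''(z)^2-p'(z)p'''(z)\bigr]}{2\,p'(z)^4},
\end{equation}
which I would obtain by differentiating $C_p=z-(1+\tfrac12 L_p)\tfrac{p}{p'}$ and simplifying. For part (2), Riemann--Hurwitz forces $2\cdot10-2=18$ critical points counted with multiplicity. Equation~\eqref{Cprime} shows $C_a'$ vanishes to order $2$ at each of the four simple roots of $p_a$ (from the $p_a^2$ factor), accounting for $8$; substituting $p_a$ gives $3p_a''^2-p_a'p_a'''=12\bigl(28z^4-8(a+1)z^2+(a+1)^2\bigr)$, whose four roots are the remaining critical points. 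Solving the biquadratic yields $z^2=\tfrac{(a+1)(2\pm\sqrt3\,i)}{14}$, which is non-real for $a\neq-1$, so these four points are neither real nor purely imaginary, are mutually distinct, and are distinct from the roots and poles of $C_a$; hence they are simple, contributing $4$. The three poles $0,\pm\sqrt{(a+1)/2}$ each have order $3$ (from the denominator of Equation~\eqref{Chebyshev}), hence local degree $3$ and critical multiplicity $2$, for a further $6$. The total $8+4+6=18$ closes, so there are no other critical points; when $a$ is real with $a>-1$ the poles $\pm\sqrt{(a+1)/2}$ are real.

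For part (3), the roots of $p_a$ are fixed by construction of the root-finding method, and Equation~\eqref{Cprime} gives $C_a'=0$ there (indeed vanishing to order $3$, so they are superattracting). Since $\deg(\text{num})>\deg(\text{den})$ we have $C_a(\infty)=\infty$, and in the chart $\zeta=1/z$ the expansion $C_a(z)=\tfrac{21}{32}z\,(1+O(1/z))$ conjugates to $\zeta\mapsto\tfrac{32}{21}\zeta\,(1+O(\zeta))$, giving multiplier $\tfrac{32}{21}>1$, so $\infty$ is repelling. Finally, from Equation~\eqref{Chebyshev}, $C_a(z)=z$ is equivalent to $Q(z)p_a(z)=0$ away from the poles; discarding the roots of $p_a$ leaves the extraneous fixed points as exactly the zeros of $Q$.

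The only genuinely delicate step is the derivative identity~\eqref{Cprime}: the differentiation is routine, but the cancellation that collapses the numerator to the compact form $3p''^2-p'p'''$ must be carried out carefully, and it is precisely this factorization that makes the critical-point bookkeeping transparent. Everything else reduces to the Riemann--Hurwitz count, the elementary biquadratic, and the chart computation at infinity.
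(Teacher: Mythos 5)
Your proposal is correct and takes essentially the same route as the paper: your derivative identity $C_p'=\frac{p^2\,[3(p'')^2-p'p''']}{2(p')^4}$ is exactly the paper's formula $C_a'=\frac{(L_{p_a})^2}{2}\,[3-L_{p_a'}]$ written out, and the Riemann--Hurwitz count of eighteen, the biquadratic $28z^4-8(a+1)z^2+(a+1)^2=0$ for the simple critical points, and the fixed-point analysis (including the multiplier $\frac{32}{21}$ at infinity and the zeros of $Q$ as extraneous fixed points) all coincide with the paper's proof, with your oddness, coprimality, and chart-at-infinity arguments merely filling in steps the paper reads off Equation~\ref{Cheby-form} or cites from Beardon. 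One parenthetical slip worth fixing: at a simple root $\xi$ of $p_a$ the derivative $C_a'$ vanishes to order $2$, not $3$ (as your part (2) count correctly uses); it is $C_a(z)-\xi$ that vanishes to order $3$, and superattraction needs only $C_a'(\xi)=0$.
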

\begin{proof}
	\begin{enumerate}
		\item It follows from Equation~\ref{Cheby-form} that    $C_a$ is an odd rational map with degree ten. 
		\item By Theorem 2.7.1. \cite{Beardon_book}, $C_a$ has eighteen  critical points counting with multiplicity. Recall that the multiplicity of a critical point is one less than the local degree of the function at that point. The derivate of $C_a$ is 
\begin{align}\label{deri}
C_a'(z)&=\frac{(L_{p_a}(z))^2}{2}[3-L_{p'_a}(z)]\nonumber\\
&=\frac{3(z^2-1)^2(z^2-a)^2\{28z^4-8(a+1)z^2+(a+1)^2\}}{8z^4\{2z^2-(a+1)\}^4}
\end{align}
The critical points of $C_a$ are $\pm 1$, $\pm \sqrt{a}$, each of multiplicity $2$ (these are the roots of $p$), $0, \pm \sqrt{\frac{a+1}{2}}$ with multiplicity $2$ each (these are the poles of $C_a$) and the solutions of 
\begin{equation}\label{simple cr}
28z^4-8(a+1)z^2+(a+1)^2=0.
\end{equation} 
The above equation has four distinct roots, namely the solutions of $z^2=\frac{(2\pm i\sqrt{3})(a+1)}{14}$  and each is a simple critical point of $C_a$.
\par That simple critical points are neither real nor purely imaginary and non-zero poles are real whenever $a\in (-1,1)\setminus \{0\}$ is obvious.
\item As the roots of $p_a$ are fixed points as well as critical points of $C_a$, their multiplier is zero. In other words, these are superattracting fixed points of $C_a$. It is evident  from Equation \ref{Cheby-form}  that the degree of the numerator is bigger than that of the denominator of $C_a$ giving that $\infty$ is a fixed point of $C_a$. Its multiplier   is $\frac{32}{21}$ (see page no. 41 \cite{Beardon_book} for the formula). Thus $\infty$ is a repelling fixed point of $C_a$. 
\par The extraneous fixed points of $C_a$ are the solutions of  
\begin{equation}\label{equ_extraneous}
22z^6-23(a+1)z^4+(5a^2+16a+5)z^2-a(a+1)=0
\end{equation} (See Equation~\ref{Chebyshev}).
\end{enumerate}
\end{proof}
\begin{Remark}\begin{enumerate}
	\item(Free critical points) The critical points $\pm 1$ and $\pm \sqrt{a}$ are   superattracting fixed points of $C_a$. The poles $0$ and $\pm \sqrt{\frac{a+1}{2}}$  are also critical points and are mapped to $\infty$ which is a repelling fixed point of $C_a$. Hence the poles are in the Julia set. 
	\par 
	In order to determine the existence of Fatou components different from the basins of superattracting fixed points, the forward orbits of the four simple critical points of $C_a$ need to be followed. These are    $c_1, ~-c_1$ and $c_2, -{c_2}$ where $c_1$ and $c_2$ are principal square roots of $\frac{(2+ i\sqrt{3})(a+1)}{14}$ and $\frac{(2- i\sqrt{3})(a+1)}{14}$ respectively. Following the general practice, we call these as free critical points.
	\item If $c$ is a free critical point then $$C_a(c)=\frac{(42c^8+Ac^6+Bc^4+Cc^2+D)c^2+a^2(a+1)}{8c^3\{2c^2-(a+1)\}^3}.$$
	For $c=c_1,$ 
		 $$C_a(c)=\frac{7^3[(42c^8+Ac^6+Bc^4+Cc^2+D)(2+i\sqrt{3})+14a^2]}{8c(a+1)^3(2+i\sqrt{3})(-5+i\sqrt{3})^3}.$$
	Further, if $a$ is real then  $$C_a(c)=\frac{R(a)+i\sqrt{3}S(a)}{8^3c(a+1)^3(-47+i8\sqrt{3})}$$
	where $R(a)=-(1345a^4+28508a^3+48838a^2+28508a+1345)$ and $S(a)=-3(111a^4-732a^3+3802a^2-732a+111)$. Since	 
	$c_1 =\bar{c_2}$,
	$$C_a(c_2)=-\frac{R(a)-i\sqrt{3}S(a)}{8^3c(a+1)^3(47+i8\sqrt{3})}.$$ Note that if $C_a(c)=0$ then $R(a)=0$ and $S(a)=0$. But $R(a)$ and $S(a)$ can be shown not to have any common root. Hence the critical values corresponding to the free critical points are non-zero whenever $a$ is real.
\end{enumerate}
\end{Remark}
\section{Dynamics of $C_a$ for real parameter}
In view of Equation~\ref{p}, for every real $\lambda$, $C_\lambda$ is conjugate to $C_a$ for some   $a\in (-1,0)\cup (0,1)$.   The study of dynamics of $C_a$ is undertaken in this section. We start with an almost obvious but useful observation. We say a Fatou component $U'$ lands on a Fatou component $U$ if there is a $k \geq 0$ such that $C_a ^{k}(U') =U$. Note that every Fatou component lands on each of its iterated forward image and in particular,  periodic Fatou components land  on themselves. By Sullivan's No Wandering theorem, every Fatou component of a rational map lands on a periodic Fatou component.
\begin{lemma}
	For every non-zero real $a$, $C_a$ preserves the real as well as the imaginary axis. Further, the following are true about the Fatou set of $C_a$.
	\begin{enumerate}
		\item If a Fatou component    intersects  either the real axis  or the imaginary axis then it does not land on any rotation domain (Siegel disk or Herman ring).
		\item No Fatou component  intersects both the real and the imaginary axis.
		\item The Julia component containing the origin is unbounded.
	\end{enumerate} 
	\label{preserve-axes}
\end{lemma}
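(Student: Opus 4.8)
The plan is to establish the two invariances first, then prove (1) by a linearization/rotation argument, (2) by ruling out an attracting or parabolic cycle that lives on both axes, and finally (3) by a plane-separation argument that reduces it to (2).

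\emph{Invariances.} Since $A,B,C,D$ and $a$ are real, $C_a$ has real coefficients and therefore commutes with complex conjugation $\tau(z)=\overline{z}$. Moreover $C_a$ is odd (Lemma~\ref{basic-cheby}), so it also commutes with $\sigma(z)=-\overline{z}$, because $C_a(-\overline{z})=-C_a(\overline{z})=-\overline{C_a(z)}=\sigma(C_a(z))$. The real axis is exactly the fixed-point set of the involution $\tau$ and the imaginary axis that of $\sigma$; a map commuting with an involution preserves its fixed-point set, giving $C_a(\widehat{\mathbb{R}})\subseteq\widehat{\mathbb{R}}$ and $C_a(\widehat{i\mathbb{R}})\subseteq\widehat{i\mathbb{R}}$. (Equivalently, in Equation~\ref{Cheby-form} the numerator is even and the denominator is $z^{3}$ times an even polynomial, so $C_a$ maps reals to reals and, being odd with real coefficients, maps $i\mathbb{R}$ to $i\mathbb{R}$.) I record that $0$, the finite poles $\pm\sqrt{(a+1)/2}$, and the fixed point $\infty$ all lie in $\mathcal{J}(C_a)$.

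\emph{Part (1).} Suppose a Fatou component $U$ meets the real axis (the imaginary case is identical) and lands on a periodic rotation domain $V$ of period $p$, say $C_a^{k}(U)=V$. Then $U\cap\mathbb{R}$ is a nonempty open subset of $\mathbb{R}$, hence contains an interval $I$, and $C_a^{k}(I)$ is a nondegenerate real-analytic arc inside $V\cap\mathbb{R}$ by invariance of $\mathbb{R}$. As $\infty$ is repelling it is not in $V$, so $V\cap\mathbb{R}$ lies in the finite line $\mathbb{R}$. The set $V\cap\mathbb{R}$ is relatively closed in $V$ and forward invariant under $C_a^{p}$; carrying it over by the linearizing coordinate $\phi\colon V\to\mathbb{D}$ (or an annulus) that conjugates $C_a^{p}$ to an irrational rotation, $\phi(V\cap\mathbb{R})$ becomes a closed rotation-invariant set containing a non-central point, hence containing an entire invariant circle. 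Pulling back, $V\cap\mathbb{R}$ would contain a Jordan curve, which is impossible inside a line. This is the one place where the genuine complex-dynamical input (an irrational rotation admits no invariant arc) is used.

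\emph{Part (2).} Suppose a Fatou component $U$ meets both axes. By Sullivan's theorem $U$ lands on a periodic component $V$, and since $U$ meets $\mathbb{R}$, part (1) forbids $V$ from being a rotation domain; thus $V$ is an attracting or parabolic domain with associated periodic point $\zeta$. The iterate $C_a^{k}$ realizing $C_a^{k}(U)=V$ carries a real point of $U$ into $V\cap\mathbb{R}$ and an imaginary point of $U$ into $V\cap i\mathbb{R}$, so $V$ meets both axes. Now iterate $C_a^{p}$ on a point of $V\cap\mathbb{R}$: the orbit stays real and converges to $\zeta$, forcing $\zeta\in\widehat{\mathbb{R}}$; the same iteration on a point of $V\cap i\mathbb{R}$ forces $\zeta\in\widehat{i\mathbb{R}}$. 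Hence $\zeta\in\widehat{\mathbb{R}}\cap\widehat{i\mathbb{R}}=\{0,\infty\}$, but $0$ is a pole (not periodic) and $\infty$ is repelling with multiplier $\tfrac{32}{21}$, so $\zeta$ can be neither attracting nor parabolic, a contradiction. I expect this to be the crux: the decisive observation is that invariance of the two axes pins the limiting cycle to the two-point set $\{0,\infty\}$, and the delicate bookkeeping is the parabolic case (where $\zeta$ lies on $\partial V$ yet is the common limit of the petal orbits coming from both axes) together with the pre-periodic ``lands on'' step handled via (1).

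\emph{Part (3).} Finally, suppose the Julia component $J_0$ containing $0$ were bounded. Then $0$ and $\infty$ lie in different components of the compact set $\mathcal{J}(C_a)\subseteq\widehat{\mathbb{C}}$, so by the standard plane-separation theorem there is a Jordan curve $\Gamma\subseteq\widehat{\mathbb{C}}\setminus\mathcal{J}(C_a)=\mathcal{F}(C_a)$ separating $0$ from $\infty$. Being connected and contained in the Fatou set, $\Gamma$ lies in a single Fatou component $U$. Since $\Gamma$ separates $0$ from $\infty$ it has nonzero winding number about $0$, hence crosses every ray from the origin and in particular meets both the real and the imaginary axis; thus $U$ meets both axes, contradicting (2). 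Therefore $J_0$ is unbounded. Apart from (2), the only nontrivial ingredient here is the plane-topology separation theorem, which I would cite rather than reprove.
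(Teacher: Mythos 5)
Your proposal is correct and takes essentially the same route as the paper: axis invariance from real coefficients plus oddness, rotation domains excluded because a forward-invariant set of real points would have to contain an invariant Jordan curve under the linearizing conjugacy to an irrational rotation, the attracting/parabolic periodic point pinned to $\{0,\infty\}$ and ruled out since $0$ is preperiodic and $\infty$ is repelling, and a separating Jordan curve in the Fatou set meeting both axes for part (3). The only cosmetic differences are your derivation of the invariances via commuting involutions (the paper computes $C_a(iy)$ directly) and your explicit appeal to the plane-separation theorem, which the paper leaves implicit.
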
 
\begin{proof}
	 	Since   all the coefficients of  the denominator and numerator  of $ C_a$ are real, $C_a (\mathbb{R}) \subseteq \mathbb{R}$.
	Now for every $y\in \mathbb{R}$, \begin{equation}\label{imm axis}
	C_a(iy)=i\left[\frac{42y^{10}-Ay^8+By^6-Cy^4+Dy^2-a^2(a+1)}{8y^3\{2y^2+(a+1)\}^3}\right] 
	\end{equation}
	where $A,~B,~C$ and $D$ are defined in Equation \ref{Cheby-form}. This shows that the imaginary axis is preserved by $C_a$. 
\begin{enumerate}
\item We prove this assuming that  a Fatou component   intersects  the real axis. The proof in the other case, when the Fatou component intersects  the imaginary axis, is exactly the same.   Let  a Fatou component $U'$ intersect  the real axis and let $r' \in U' \cap \mathbb{R}$.  If $U'$ lands on a rotation  domain $U$ with period $p$,  then  $C_a^{n'}(U')=U$ for some $n' \geq 0$.  Now $C_a^{kp}(r ) \in U $ for all $k$ where $r =C_a^{n'}(r') \in U $.    Since $C_a^{kp}: U \to U$ is conformally conjugate to an irrational rotation $z \mapsto e^{i 2 \pi \theta}z$ (for some irrational number $\theta$)  on a disk or on an annulus about the origin, the set $\{C_a^{kp}(r)\}_{k>0}$ is dense in a Jordan curve contained in $U$. Since $C_a^{kp}(r)$ is real for all $k$, this Jordan curve can only be $\mathbb{R}\cup\{\infty\}$. But $\infty$ is not in the Fatou set leading to a contradiction. 
\item If a Fatou component of $C_a$ intersects both the real and the imaginary axis then the periodic Fatou component $U$  on which it lands  intersects both the axes and hence   can not be a rotation  domain by (1) of this lemma. Therefore, it must be an attracting or parabolic domain. The corresponding periodic points  are in the intersection of both the axes because both the axes are invariant under $C_a$. In other words, the periodic point can only be either $0$ or $\infty$. Since $C_a(0)=\infty$ and $\infty$ is a fixed point, the periodic point must be $\infty$. But $ \infty$ is a repelling fixed point leading to a contradiction.
\item If the Julia component containing the origin is bounded then a Jordan curve can be found in the Fatou set surrounding the origin, i.e., the bounded component of the complement of the Jordan curve contains the origin. The Fatou component containing this Jordan curve has to intersect both the real and the imaginary axes, which is not possible by (2) of this lemma. Therefore, the Julia component containing the origin is unbounded.
\end{enumerate}
\end{proof} 
There are some important consequences. We say a Fatou component surrounds a point if the point is in a bounded component of its complement.
\begin{theorem}
If $U'$ is a multiply connected Fatou component of $C_a$ then there is an $m$ such that $C_a^{m}(U')$ is a Fatou component surrounding a non-zero pole. If $U$ is a periodic Fatou component on which  $C_a^{m}(U')$ lands then $U$ is an attracting or parabolic domain corresponding to a real periodic point. In particular, there is no Herman ring for $C_a$.  
\label{multiply}
\end{theorem}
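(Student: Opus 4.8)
The plan is to reduce the last two assertions to the first one, and then to establish the first by a covering/density argument. Suppose the first assertion holds, so that $V:=C_a^{m}(U')$ surrounds a non-zero pole. Since $a\in(-1,1)$, the non-zero poles $\pm\sqrt{\frac{a+1}{2}}$ are real, so $V$ surrounds a point $x_0\in\mathbb{R}$ while $\infty$ (being a repelling fixed point, hence in $\mathcal{J}(C_a)$) lies in a different complementary component of $V$. A Jordan curve $\gamma\subseteq V$ separating $x_0$ from $\infty$ must meet the real axis, because the real ray from $x_0$ to $+\infty$ runs from the inside of $\gamma$ to its outside; hence $V\cap\mathbb{R}\neq\emptyset$. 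By Lemma~\ref{preserve-axes}(1), $V$ cannot land on a rotation domain, so the periodic component $U$ on which $C_a^{m}(U')$ lands is an attracting or parabolic domain. Its periodic point is real: since $\mathbb{R}$ is invariant, $U$ also meets $\mathbb{R}$, and the orbit of a real point of $U$ converges along $\mathbb{R}$ to the attracting or parabolic point. Finally, a Herman ring $H$ is a multiply connected periodic rotation domain; applying the first assertion to $U'=H$ would force some $C_a^{m}(H)$ to meet $\mathbb{R}$ and hence (by Lemma~\ref{preserve-axes}(1)) not to land on a rotation domain, contradicting that $C_a^{m}(H)$ lies in the rotation-domain cycle of $H$ and lands on itself. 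Thus there is no Herman ring.

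For the first assertion, write $\widehat{W}$ for the union of a Fatou component $W$ with all bounded components of its complement (equivalently, $\widehat{\mathbb{C}}$ minus the complementary component of $W$ containing $\infty$); this set is open and simply connected, and $W$ is multiply connected exactly when $\widehat{W}\setminus W\neq\emptyset$. If $U'$ is multiply connected then it has a bounded complementary component $E$ with $\partial E\subseteq\mathcal{J}(C_a)$ and $E\subseteq\widehat{U'}$, so $\widehat{U'}$ is an open set meeting $\mathcal{J}(C_a)$. Since $\infty\in\mathcal{J}(C_a)$ is not an exceptional point, the backward orbit $\bigcup_{n\ge1}C_a^{-n}(\infty)$ is dense in $\mathcal{J}(C_a)$, so the open set $\widehat{U'}$ contains a point $w$ with $C_a^{N}(w)=\infty$ for some minimal $N\ge1$; as $w\in\mathcal{J}(C_a)$, it lies in a hole of $U'$, i.e. $U'$ surrounds $w$. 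I then propagate this forward: setting $V_k=C_a^{k}(U')$, the point $C_a^{N-1}(w)$ is a pole, and I claim it is surrounded by $V_{N-1}$ (or a pole is already surrounded by some earlier $V_k$). Granting this, some $V_m$ surrounds a pole, and this pole cannot be $0$: by Lemma~\ref{preserve-axes}(3) the Julia component of $0$ is unbounded, so it lies in the unbounded complementary component of the Fatou component $V_m$ and not in any hole, whence $V_m$ surrounds a non-zero pole.

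The main obstacle is the propagation step, i.e. showing that ``being surrounded'' is preserved under $C_a$ until a pole is reached. The plan is to prove that whenever $\widehat{V_k}$ contains no pole one has $\widehat{V_{k+1}}=C_a(\widehat{V_k})$: the restriction $C_a\colon\widehat{V_k}\to C_a(\widehat{V_k})$ is then a proper holomorphic map out of a simply connected domain omitting $\infty$, and Riemann--Hurwitz (with $\chi(\widehat{V_k})=1$) forces the image to be simply connected as well, so it equals $\widehat{V_{k+1}}$ and carries the surrounded point $C_a^{k}(w)$ to the surrounded point $C_a^{k+1}(w)$. If instead some $\widehat{V_k}$ with $k<N-1$ already contains a pole, then that pole lies in a hole of $V_k$ and we are finished earlier. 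The delicate points to check carefully are the properness of $C_a$ on $\widehat{V_k}$ (using that $\partial\widehat{V_k}\subseteq\mathcal{J}(C_a)$ is mapped into $\mathcal{J}(C_a)$) and the identification $C_a(\widehat{V_k})=\widehat{V_{k+1}}$; once these are in place the theorem follows.
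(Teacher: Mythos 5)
Your reduction of the second and third assertions to the first is correct, and it is essentially the paper's own argument: a Fatou component surrounding the (real) non-zero pole must meet $\mathbb{R}$ since $\infty\in\mathcal{J}(C_a)$ lies in a different complementary component, Lemma~\ref{preserve-axes}(1) then excludes rotation domains, invariance of $\mathbb{R}$ forces the periodic point to be real, and a Herman ring would contradict all of this. Your setup for the first assertion (density of $\bigcup_{n\ge1}C_a^{-n}(\infty)$ in $\mathcal{J}(C_a)$, a preimage $w$ of $\infty$ in a hole of $U'$, and Lemma~\ref{preserve-axes}(3) to rule out the pole $0$) also matches the paper. The genuine gap is the propagation step, which you yourself flag as ``delicate'' and leave unverified, and it does not work as written. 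First, properness of $C_a\colon\widehat{V_k}\to C_a(\widehat{V_k})$ cannot be deduced from ``$\partial\widehat{V_k}\subseteq\mathcal{J}(C_a)$ is mapped into $\mathcal{J}(C_a)$'': the open set $C_a(\widehat{V_k})$ itself contains Julia points (the images of points in the holes of $V_k$), so for $z_n\to z_*\in\partial\widehat{V_k}$ the limit $C_a(z_*)\in\mathcal{J}(C_a)$ may perfectly well lie \emph{inside} $C_a(\widehat{V_k})$. Indeed, this properness amounts to $C_a(\partial\widehat{V_k})\cap C_a(\widehat{V_k})=\emptyset$, i.e.\ to the claim that no point of the outer boundary of $V_k$ shares its image with a point of $\widehat{V_k}$; for a degree-ten map there is no a priori reason for this, and you give none. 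Second, even granting properness, Riemann--Hurwitz only gives that $C_a(\widehat{V_k})$ is simply connected; since a simply connected open set containing $V_{k+1}$ and omitting $\infty$ automatically contains every hole of $V_{k+1}$, what you actually obtain is $\widehat{V_{k+1}}\subseteq C_a(\widehat{V_k})$ --- the wrong inclusion. Pushing the surrounded point forward requires $C_a(\widehat{V_k})\subseteq\widehat{V_{k+1}}$, i.e.\ that the image does not spill into the complementary component of $V_{k+1}$ containing $\infty$, and that is exactly what remains unproved.

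The statement you need is true, but the efficient proof (the paper's) bypasses properness and Riemann--Hurwitz entirely. Take a Jordan curve $\gamma\subset U'$ surrounding a bounded complementary component of $U'$, let $D$ be the bounded complementary component of $\gamma$, and let $N$ be minimal with $C_a^{-N}(\infty)\cap D\neq\emptyset$ (such $N$ exists because $D$ meets $\mathcal{J}(C_a)$). By minimality, and because points of $\gamma\subset\mathcal{F}(C_a)$ never hit $\infty$, the map $C_a^{N-1}$ is analytic on $\overline{D}$; hence $C_a^{N-1}(D)$ is a bounded open set whose boundary is contained in $C_a^{N-1}(\gamma)\subseteq C_a^{N-1}(U')$. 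If $w\in D$ satisfies $C_a^{N}(w)=\infty$, the pole $q=C_a^{N-1}(w)$ lies in $C_a^{N-1}(D)$, so the complementary component of the Fatou component $C_a^{N-1}(U')$ containing $q$ is connected, disjoint from $C_a^{N-1}(\gamma)$, and therefore trapped inside the bounded set $C_a^{N-1}(D)$. Thus $C_a^{N-1}(U')$ surrounds $q$, and your Lemma~\ref{preserve-axes}(3) argument shows $q\neq0$. The same one-step curve argument, applied to a curve in $V_k$ around the hole containing $C_a^{k}(w)$, is also how one proves the inclusion $C_a(\widehat{V_k})\subseteq\widehat{V_{k+1}}$ you wanted whenever $\widehat{V_k}$ contains no pole; properness is never needed.
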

\begin{proof}
 Consider a Jordan curve $\gamma \subset U'$ such that each of its complementary component intersects the Julia set. Since the Julia set is the closure of the backward orbit of any  point in it (see Theorem 4.2.7 (ii), \cite{Beardon_book}) and all the poles are in the Julia set of $C_a$, there is an $m$ such that $C_a^{m}(\gamma)$ surrounds a pole of $C_a$. Indeed, $m$ can be taken as the smallest natural number such that $C_a^{m}$ is analytic in the bounded component of the complement of $\gamma$ and the preceding statement follows from the Open Mapping Theorem. The curve $C_a^{m}(\gamma)$  can not surround the pole at the origin by Lemma~\ref{preserve-axes}(3) and therefore it surrounds a non-zero pole. The Fatou component $C_a^{m}(U')$ containing  $C_a^{m}(\gamma)$ clearly surrounds the non-zero pole of $C_a$. 
 \par 
	If $U$ is a periodic Fatou component on which  $C_a^{m}(U')$ lands then $U$ intersects the real line because the real line invariant under $C_a$ and $C_a^{m}(U')$ intersects the real line. Since $U$ can not be a rotation domain, it is either an attracting domain or a parabolic domain. Again the invariance of the real line under $C_a$ gives that the  periodic point corresponding  to $U$ must be real. 
	\par   Since Herman rings are multiply connected, it must land on an attracting or parabolic domain which is absurd. Therefore $C_a$ does not have any Herman ring.
	\end{proof}
Note that there are two non-zero poles and the Julia component containing one is bounded if and only if that containing the other is bounded. This is because $C_a$ is an odd function giving that $z\in \mathcal{J}(C_a)$ if and only if $-z\in \mathcal{J}(C_a)$. The boundedness of such Julia components determines the connectedness of the whole Julia set.
\begin{corollary} The Julia set of $C_a $ is connected if and only if the Julia component containing a non-zero pole is unbounded.
\label{connected-JS}
\end{corollary}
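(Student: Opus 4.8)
The plan is to reduce the statement to the standard topological dichotomy for rational maps: $\mathcal{J}(C_a)$ is connected if and only if every Fatou component of $C_a$ is simply connected; equivalently, $\mathcal{J}(C_a)$ is disconnected exactly when some Fatou component is multiply connected. Granting this, the corollary becomes the assertion that a multiply connected Fatou component exists if and only if the Julia component carrying a non-zero pole is bounded, and one implication of this is essentially Theorem~\ref{multiply}.

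For the direction ``connected $\Leftarrow$ unbounded'' I would argue by contraposition. If $\mathcal{J}(C_a)$ is disconnected, pick a multiply connected Fatou component $U'$. Theorem~\ref{multiply} furnishes an $m$ such that $C_a^{m}(U')$ is a Fatou component surrounding a non-zero pole $z_0$, so by definition $z_0$ lies in a bounded complementary component $V$ of $C_a^{m}(U')$. As $V$ is bounded and its boundary lies in the Fatou set, the Julia component through $z_0$ cannot escape $V$ and is therefore bounded; by the remark preceding the statement (oddness of $C_a$) the Julia component of the other non-zero pole is then bounded as well. Hence if the Julia component of a non-zero pole is unbounded, no multiply connected Fatou component can exist and $\mathcal{J}(C_a)$ is connected.

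For the converse, assume the Julia component $K$ containing a non-zero pole is bounded. Since $\infty$ is a repelling fixed point of $C_a$ it lies in $\mathcal{J}(C_a)$, so $K$ is a compact connected subset of $\mathbb{C}$ while $\infty$ sits in a different component of $\mathcal{J}(C_a)$. Invoking the classical plane-topology fact that distinct components of a compact set can be separated by a Jordan curve lying in the complement, I would produce a Jordan curve $\gamma \subset \mathcal{F}(C_a)$ with $K$ inside and $\infty$ outside. Both complementary disks of $\gamma$ then meet $\mathcal{J}(C_a)$, and since $\gamma \cap \mathcal{J}(C_a)=\emptyset$ this exhibits a separation of $\mathcal{J}(C_a)$ into two nonempty relatively closed pieces; thus $\mathcal{J}(C_a)$ is disconnected.

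The step I expect to be the main obstacle is the separation argument underlying the converse, namely justifying that a bounded Julia component can be enclosed by a Jordan curve lying in the Fatou set (equivalently, pinning down the topological equivalence between connectedness of $\mathcal{J}(C_a)$ and simple connectedness of all Fatou components), which is where the topological care concentrates. Everything else is bookkeeping once Theorem~\ref{multiply} and the invariance/oddness observations are in place.
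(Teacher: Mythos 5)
Your proposal is correct, and for the substantive direction (unbounded $\Rightarrow$ connected) it is essentially the paper's own argument: a disconnected Julia set yields a multiply connected Fatou component, Theorem~\ref{multiply} pushes it forward to a Fatou component surrounding a non-zero pole, the Julia component of that pole is then trapped in a bounded complementary component, and oddness of $C_a$ transfers boundedness to the other pole. Where you genuinely diverge is the other direction, and there you have made a triviality look like the hard part. In the paper this direction is a one-line observation: if $\mathcal{J}(C_a)$ is connected it consists of a single Julia component, which contains both $\infty$ (a repelling fixed point, hence in the Julia set) and the non-zero poles, so the component containing a non-zero pole is unbounded. Your contrapositive via the classical separation theorem (enclosing a bounded Julia component by a Jordan curve lying in the complement of a compact subset of the sphere) is a true statement, but it is entirely unnecessary: once the component $K$ containing the pole is bounded, it is distinct from the component containing $\infty$, so $\mathcal{J}(C_a)$ has at least two components and is therefore disconnected --- disconnectedness does not need to be witnessed by a separating curve. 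So the step you flag as ``the main obstacle'' is not an obstacle at all; all the real content of the corollary sits in Theorem~\ref{multiply}. One small imprecision in your first direction: if $V$ is a bounded complementary component of the Fatou component $U=C_a^{m}(U')$, then $\partial V\subset\partial U\subset\mathcal{J}(C_a)$, so $\partial V$ lies in the Julia set, \emph{not} the Fatou set; the clean justification is simply that the Julia component through the pole is a connected subset of $\widehat{\mathbb{C}}\setminus U$ containing that pole, hence is contained in $V$ and is bounded. (Your phrasing would be accurate if $V$ were instead the bounded complementary component of the curve $C_a^{m}(\gamma)$ appearing in the proof of Theorem~\ref{multiply}.)
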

\begin{proof}
	 If the  Julia set of $C_a $ is connected then there is only a single Julia component. Since $\infty$ and the non-zero poles are in the Julia set, the  Julia component containing a non-zero pole is unbounded. Conversely, if the Julia set is not connected then there is a multiply connected Fatou component, say $U'$. By Theorem~\ref{multiply}, there is an $m$ such that $C_a^{m}(U')$ surrounds a non-zero pole. In particular, the Julia component containing this non-zero pole is bounded. In other words, if  the Julia component containing a non-zero pole is unbounded then the Julia set is connected.  
\end{proof}

For $0<a<1$, all the roots of $p_a$ are real where as for $-1<a<0$, $p_a$ has two real and two purely imaginary roots. These two cases need to be treated differently. Before we consider them separately, two useful lemmas are presented.
\begin{lemma}[Symmetry about the coordinate axes]
	 The map $z \mapsto -\bar{z}$ preserves the Fatou and the Julia sets of $C_a$ for all real non-zero $a$. In particular, if a Fatou component contains a real (or a purely imaginary) number   then it is symmetric about the real axis (or the imaginary axis respectively).
	 \label{symmetry}
\end{lemma}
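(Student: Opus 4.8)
The plan is to exhibit $z\mapsto-\bar z$ as an anti-holomorphic symmetry of the iteration and then invoke the principle that such a symmetry must preserve the Fatou and Julia sets. First I would record two elementary commutation relations. Because $a$ is real, every coefficient in the numerator and denominator of $C_a$ in Equation~\ref{Cheby-form} is real, so $\overline{C_a(z)}=C_a(\bar z)$; that is, $C_a$ commutes with complex conjugation $\tau(z)=\bar z$. By Lemma~\ref{basic-cheby}(1) the map $C_a$ is odd, hence it also commutes with $\rho(z)=-z$. Setting $\sigma:=\tau\circ\rho$, which is precisely $z\mapsto-\bar z$, one gets
\begin{equation*}
C_a(-\bar z)=-C_a(\bar z)=-\overline{C_a(z)}=\sigma\bigl(C_a(z)\bigr),
\end{equation*}
so $C_a\circ\sigma=\sigma\circ C_a$. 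Each of $\tau,\rho,\sigma$ is an involution and a homeomorphism of $\widehat{\mathbb C}$.

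Next I would promote each commutation relation to the iterates and then to normality. By induction $C_a^{\,n}\circ g=g\circ C_a^{\,n}$ for all $n$, where $g$ is any of $\tau,\rho,\sigma$; equivalently $C_a^{\,n}=g\circ C_a^{\,n}\circ g^{-1}$, using $g=g^{-1}$. Since $g$ is a homeomorphism of $\widehat{\mathbb C}$, pre- and post-composition with $g$ preserves local uniform convergence in the spherical metric and therefore preserves normality of families. Hence $\{C_a^{\,n}\}$ is normal on a neighborhood of $z_0$ if and only if it is normal on a neighborhood of $g(z_0)$, which yields $g(\mathcal F(C_a))=\mathcal F(C_a)$ and, on complements, $g(\mathcal J(C_a))=\mathcal J(C_a)$. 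Taking $g=\sigma$ proves the first assertion; I would keep the same conclusion for $g=\tau$, which is needed for the real axis below.

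For the consequence about symmetric components I would argue componentwise, using that distinct Fatou components are disjoint. If a Fatou component $U$ contains a real number $r$, then $\tau(U)$ is again a Fatou component and $\tau(r)=r\in U\cap\tau(U)$, forcing $\tau(U)=U$, so $U$ is symmetric about the real axis. Likewise, if $U$ contains a purely imaginary $iy$, then $\sigma(iy)=-\overline{iy}=iy\in U\cap\sigma(U)$, giving $\sigma(U)=U$ and symmetry about the imaginary axis.

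I expect no genuine obstacle: the substance is the bookkeeping of the two commutation relations together with the standard fact that an anti-holomorphic involution commuting with a rational map preserves its Julia set. The only delicate point is the orientation-reversing nature of $\tau$ and $\sigma$, and one must check that conjugating the family by such a homeomorphism does not disturb normality; this is immediate, since normality is an equicontinuity statement in the spherical metric and is insensitive to orientation.
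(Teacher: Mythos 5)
Your proposal is correct and follows essentially the same route as the paper: establish the commutation relation $C_a\circ\psi=\psi\circ C_a$ for the anti-holomorphic involution $\psi(z)=-\bar z$ (using oddness plus real coefficients), pass to iterates, and conclude that normality -- hence the Fatou and Julia sets -- is preserved. One point where your write-up is actually more complete than the paper's: the paper works only with $\psi(z)=-\bar z$, which fixes purely imaginary points but sends a real $r$ to $-r$, so by itself it yields only the imaginary-axis half of the ``in particular'' statement; the real-axis half needs the separate symmetry $\tau(z)=\bar z$, which you obtain explicitly by decomposing $\psi=\tau\circ\rho$ and noting that $\rho(z)=-z$ preserves the sets because $C_a$ is odd. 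The paper leaves this combination implicit, and also omits the componentwise argument (that $\tau(U)$ and $U$ are Fatou components sharing a point, hence equal) that you spell out. So your proof fills in precisely the steps a careful reader must reconstruct from the paper's three-line argument.
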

\begin{proof}
	 Since $ C_a$ is odd and all the coefficients of its denominator as well as numerator are real, $C_a(-\bar{z}) =-\overline{C_a(z)}$. Therefore, if $\psi(z)=-\bar{z}$ then $C_a^n(z)=\psi^{-1} \circ C_a^n \circ \psi(z)$ for all $n$ and all $z$. This gives that $\psi(\mathcal{F}(C_a))=\mathcal{F}(C_a)$ and  $\psi(\mathcal{J}(C_a))=\mathcal{J}(C_a)$.
\end{proof}
 If $z$ is an extraneous fixed point of $C_a$ then its multiplier is given  by the formula $\lambda(z)=2[3-L_{p_{a}'}(z)]$ (see \cite{Nayak-Pal2022}), which  is nothing but 
 \begin{equation}\label{extra_mult}
 \lambda(z)=2\left[3-\frac{12z^2\{2z^2-(a+1)\}}{\{6z^2-(a+1)\}^2}\right].
 \end{equation}
 \begin{lemma}
 	All the  real extraneous fixed points of $C_a$ are repelling for every non-zero $a \in (-1,1)$.
 	\label{multiplier-extraneous}
 \end{lemma}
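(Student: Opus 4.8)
The plan is to use that $C_a$ is odd, so both the defining equation~\ref{equ_extraneous} and the multiplier formula~\ref{extra_mult} are even in $z$, and to work entirely in the variable $w=z^2$. Setting $s=a+1$, a real extraneous fixed point has $w\geq 0$ real, and Equation~\ref{extra_mult} becomes a rational function of $w$ alone. After clearing denominators I would rewrite it as
\[
\lambda(w)=2\left[3-\frac{12w(2w-s)}{(6w-s)^2}\right]=\frac{6\bigl(28w^2-8ws+s^2\bigr)}{(6w-s)^2}.
\]
The pleasant point, which drives the whole argument, is that I then never need to locate the real extraneous fixed points (the non-negative roots of the cubic in $w$ coming from Equation~\ref{equ_extraneous}); it suffices to control $\lambda(w)$ on the entire real line. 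As a consistency check, note the numerator $28w^2-8ws+s^2$ is exactly the free-critical-point polynomial of Equation~\ref{simple cr}, so $\lambda$ vanishes precisely at the (non-real) free critical points.

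The key step is to estimate $\lambda(w)-1$ rather than $|\lambda(w)|$ directly. A short computation gives
\[
\lambda(w)-1=\frac{132w^2-36ws+5s^2}{(6w-s)^2},
\]
and, viewed as a quadratic in $w$, the numerator has discriminant $36^2s^2-4\cdot 132\cdot 5\, s^2=-1344\, s^2<0$ for every $a\in(-1,1)$, where $s=a+1\neq 0$. Since its leading coefficient is positive, the numerator is strictly positive for all real $w$, while the denominator is a positive square wherever it does not vanish. Hence $\lambda(w)>1$ for every real $w$ with $6w\neq s$, which immediately forces $|\lambda|>1$ and establishes repulsivity, provided no extraneous fixed point sits at the pole $w=s/6$ of the multiplier.

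Ruling out that pole is the only genuine case-check, and it is where I would be careful. At $w=s/6$ one has $6z^2-(a+1)=0$, i.e. $p_a''(z)=0$ while $p_a'(z)=2z(2w-s)\neq 0$ and $z\neq 0$, so $L_{p_a}(z)=0$. But every extraneous fixed point satisfies $1+\tfrac12 L_{p_a}(z)=0$ (equivalently $Q(z)=0$), forcing $L_{p_a}(z)=-2\neq 0$; hence $w=s/6$ cannot be an extraneous fixed point. Concretely, this is confirmed by substituting $z^2=s/6$ into the left side of Equation~\ref{equ_extraneous}, which yields the nonzero value $\tfrac{8}{27}s^3$ since $s=a+1\in(0,2)$. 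The main obstacle is therefore not computational difficulty but the conceptual move of bounding $\lambda(w)-1$ over all real $w$ via the negative discriminant, together with this pole exclusion; once both are in place the conclusion is immediate.
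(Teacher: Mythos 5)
Your proof is correct, and it takes a genuinely different route from the paper's. The paper's proof is a calculus argument: it sets $\xi^2=\frac{a+1}{2}$, computes $\lambda'(x)$ on the real line, shows $\lambda$ is increasing on $(0,\frac{\xi}{\sqrt 3})$ and decreasing on $(\frac{\xi}{\sqrt 3},+\infty)$ with $\lim_{x\to+\infty}\lambda(x)=\frac{14}{3}$, and concludes $\lambda(x)>\frac{14}{3}$ for every real $x$. Your route is purely algebraic: passing to $w=z^2$ and $s=a+1$, writing $\lambda(w)-1=\frac{132w^2-36sw+5s^2}{(6w-s)^2}$, and killing the numerator's real roots with the negative discriminant $-1344s^2$; I checked the algebra, including $f(s/6)=\frac{8}{27}s^3$, and it is all right. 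What each buys: the paper gets the sharper quantitative bound (multiplier at least $\frac{14}{3}$, used nowhere else but stated), while your argument is shorter, derivative-free, and — because it proves $\lambda(w)>1$ for \emph{all} real $w\neq s/6$, negative $w$ included — it simultaneously covers purely imaginary extraneous fixed points (where $w=z^2<0$), which the paper must treat by a separate monotonicity analysis of $\tilde{\lambda}$ inside the proof of Lemma~\ref{extraneous-negative-parameter}. Your pole exclusion at $w=s/6$ is also a genuine point of care that the paper leaves implicit: the formula in Equation~\ref{extra_mult} comes from evaluating $C_a'=\frac{(L_{p_a})^2}{2}[3-L_{p_a'}]$ under the identity $L_{p_a}=-2$ at an extraneous fixed point, and your observation that $L_{p_a}=0$ whenever $p_a''$ vanishes (with $p_a'\neq 0$ there) is exactly why the formula is legitimately applicable at every extraneous fixed point.
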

 \begin{proof}
 	For every $a \in (-1,1)\setminus \{0\},$ if $ \xi$ denotes the positive square root of $\frac{a+1}{2}$ then $ \lambda(z)$ can be rewritten as $$2\left[3-\frac{6z^2\{z^2- {
 			\xi}^2\}}{\{3z^2- {\xi}^2\}^2}\right].$$
 	In order to determine the multipliers of the real extraneous fixed points, we need to analyse the function
 	 $$ \lambda(z)=2\left[3-\frac{6z^2\{z^2- {\xi}^2\}}{\{3z^2- {\xi}^2\}^2}\right]$$
 	  on the real line. It  suffices to do it in $\{x \in \mathbb{R}: x \geq 0\}$ as the function $z \mapsto \lambda(z)$ is even.
 	
 	The function 	 $\lambda'(x)=-\frac{24 {\xi}^2x(x^2+ {\xi}^2)}{(3x^2- {\xi}^2)^3} >0$ for all $x \in (0, \frac{\xi}{\sqrt{3}})$ and therefore $\lambda(x)$ is increasing in this interval. Consequently, $\lambda(x)>6$ for all $x \in (0, \frac{\xi}{\sqrt{3}}) $. Similarly,  $\lambda'(x)=-\frac{24 {\xi}^2x(x^2+ {\xi}^2)}{(3x^2- {\xi}^2)^3} <0$  for all $x>\frac{ {\xi}}{\sqrt{3}}$. Since  $\lim\limits_{x\to +\infty}\lambda(x)=\frac{14}{3}$, $\lambda(x)$ is a strictly decreasing function in $(\frac{\xi}{\sqrt{3}}, +\infty)$  with minimum value $\frac{14}{3}$. Consequently $\lambda(x)> \frac{14}{3}$ for all real $x$. Thus the multiplier of every real extraneous fixed point of $C_a$ is at least $\frac{14}{3}$. Hence every real extraneous fixed point of $C_a$ is repelling. The graphs of $\lambda(x)$ are given in Figure~\ref{graph-multiplier}.
 \end{proof}
 
\begin{figure}[h!]
	\begin{subfigure}{.52\textwidth}
	\centering
	\includegraphics[width=0.9\linewidth]{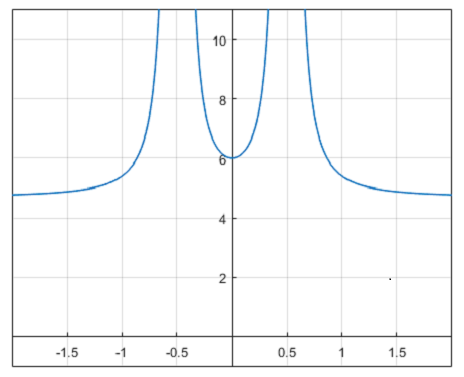}
	\caption{The graph of $\lambda$ for $a=0.5$}
\end{subfigure}
 \hspace{-1.0cm}
	\begin{subfigure}{.52\textwidth}
		\centering
		\includegraphics[width=0.9\linewidth]{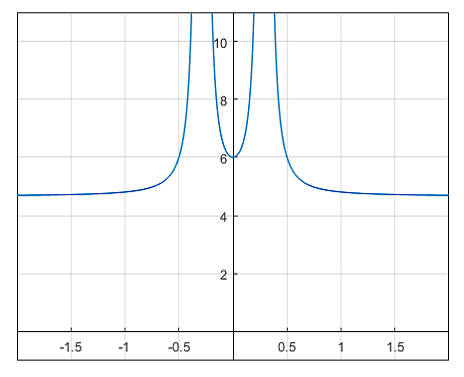}
		\caption{The graph of $\lambda$ for $a=-0.5$}
	\end{subfigure}
	\caption{The graph of $\lambda $ }
	\label{graph-multiplier}
\end{figure}

\subsection{Positive parameter}
 First, we determine the location of  extraneous fixed points. Recall that there are six extraneous fixed point of $C_a$ and these are the solutions of $Q(z)=0$ where 
  $Q(z)$ is as given in   Lemma~\ref{basic-cheby}(3).  
\begin{lemma}
All the extraneous fixed points  of $C_a, 0<a <1$  are in $(-1,1)$ and hence are repelling.
	\label{extraneous} 
\end{lemma}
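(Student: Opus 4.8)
The plan is to exploit that $Q$ is a polynomial in $z^2$: writing $w=z^2$, the equation $Q(z)=0$ from Lemma~\ref{basic-cheby}(3) becomes the cubic
\[
q(w)=22w^3-23(a+1)w^2+(5a^2+16a+5)w-a(a+1).
\]
A real extraneous fixed point $z$ lies in $(-1,1)$ exactly when $w=z^2$ is a real root of $q$ in $(0,1)$, and conversely each real root $w\in(0,1)$ produces the two real fixed points $\pm\sqrt{w}\in(-1,1)$. So it suffices to prove that all three roots of $q$ are real and lie in $(0,1)$; once this is established, the six points $\pm\sqrt{w}$ are real and interior to $(-1,1)$, and being real extraneous fixed points they are repelling by Lemma~\ref{multiplier-extraneous}, which also explains the word ``hence'' in the statement.

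The core of the argument is a sign analysis of $q$ on the real line, and the key is to test $q$ not at arbitrary points but at $0$, $a$, $\tfrac{a+1}{2}$ and $1$, where the values factor into perfect squares:
\[
q(0)=-a(a+1),\quad q(a)=4a(a-1)^2,\quad q\!\left(\tfrac{a+1}{2}\right)=-\tfrac{(a+1)(a-1)^2}{2},\quad q(1)=4(1-a)^2.
\]
For $0<a<1$ these have signs $-,+,-,+$ respectively, and the four abscissae are strictly ordered as $0<a<\tfrac{a+1}{2}<1$ (since $\tfrac{a+1}{2}-a=\tfrac{1-a}{2}>0$ and $a<1$). Three applications of the Intermediate Value Theorem then produce a root of $q$ in each of $(0,a)$, $(a,\tfrac{a+1}{2})$ and $(\tfrac{a+1}{2},1)$. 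As $q$ is cubic it has at most three roots, so these are all of them; in particular all three are real, distinct, and lie in $(0,1)$. Taking square roots yields six distinct real extraneous fixed points in $(-1,1)$, and invoking Lemma~\ref{multiplier-extraneous} completes the proof.

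I expect the only genuine obstacle to be the discovery of the right evaluation points. The naive route would be to study the two critical points $w_\pm$ of $q$ (both of which can be shown to lie in $(0,1)$) and then argue that the local maximum of $q$ is positive while the local minimum is negative, i.e.\ that the discriminant of $q$ is positive; this forces one to handle cumbersome radical and quartic expressions in $a$. The test values $w=a$ and $w=\tfrac{a+1}{2}$ bypass this entirely, since each makes a factor $(a-1)^2$ appear and pins down the sign immediately. Identifying these two values is the one clever step; everything after it is routine verification of the factorizations and an appeal to the already-proved multiplier estimate.
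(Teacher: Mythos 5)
Your proof is correct and follows essentially the same route as the paper: the substitution $w=z^2$, the sign checks of the cubic at the points $0$, $a$, $\tfrac{a+1}{2}$, $1$ (with the same perfect-square factorizations), the Intermediate Value Theorem to locate the three roots in $(0,a)$, $(a,\tfrac{a+1}{2})$, $(\tfrac{a+1}{2},1)$, and the appeal to Lemma~\ref{multiplier-extraneous} for repellence. Nothing is missing; the only cosmetic difference is that the paper also records the resulting interlacing of the six fixed points with $\pm\sqrt{a}$ and $\pm\sqrt{\tfrac{a+1}{2}}$, which it reuses later.
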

\begin{proof}
	In view of Lemma!\ref{multiplier-extraneous}, it is enough to show that all the extraneous fixed points of $C_a$ are in $(-1,1)$.
Let $w=z^2$ and $$f(w)=22w^3-23(a+1)w^2+(5a^2+16a+5)w-a(a+1)$$  (see Lemma~\ref{basic-cheby}(3)). Then  $f(0)=-a(a+1)<0$, $f(a)=4a(1-a)^2>0$, $f(\frac{a+1}{2})=-\frac{1}{2}(1-a)^2(1+a)<0$
	and $f(1)=4(1-a)^2>0$. Therefore, $f$ has a root in each of the intervals $(0,a)$, $(a,\frac{a+1}{2})$ and $(\frac{a+1}{2},1)$, and the square roots of these roots are precisely the extraneous fixed points of $ C_a$. If $a_1,a_2,a_3$ are the positive extraneous fixed points in decreasing order then
		$$ 0<a_3<\sqrt{a}<a_2<\sqrt{\frac{a+1}{2}}<a_1<1,$$
		 and the other three extraneous fixed points $-a_1, -a_2, -a_3$ satisfy 	$$-1<-a_1<-\sqrt{\frac{a+1}{2}}<-a_2<-\sqrt{a}<-a_3 <0.$$
		Thus all the extraneous fixed points of $C_a$ for $ 0 <a< 1$ are in $(-1,1)$.
\end{proof}
The graph of $C_{0.5}(x)$, $x\in \mathbb{R}$ is described in Figure \ref{plot C}. Green dots represent the extraneous fixed points, blue dots along with $\pm 1$ are the superattracting fixed points of $C_{0.5}$ corresponding to the roots of $p_{0.5}$, whereas the poles are indicated by the red dots. 
\par 
 From Equation (\ref{deri}), for $x\in \mathbb{R}$, we have 
$$C_a'(x)=\frac{3(x^2-1)^2(x^2-a)^2\{28x^4-8(a+1)x^2+(a+1)^2\}}{8x^4\{2x^2-(a+1)\}^4}.$$ 
As   $28x^4-8(a+1)x^2+(a+1)^2 $ is not zero for any real $x$ and is positive at $x=0$ (see Equation \ref{simple cr}), $C_a'(x)\geq 0$ for all real $x$. Therefore, $C_a$ is increasing in $\mathbb{R}$. Further,
\begin{equation}\label{Comp x}
C_a(x)-x=-\frac{11(x^2-1)(x^2-a)(x^2-a_1^2)(x^2-a_2^2)(x^2-a_3^2)}{32x^3(x^2-\frac{a+1}{2})^3} 
\end{equation}
where $a_1 ,~a_2,~a_3$ are as mentioned in Lemma~\ref{extraneous}. Recall that $C_a$ has four superattracting fixed points, namely $-1,~-\sqrt{a},~\sqrt{a},~1$ and these are all real. Let $\mathcal{A}_{-1},~\mathcal{A}_{-\sqrt{a}},~\mathcal{A}_{\sqrt{a}},~\mathcal{A}_{1}$ be their respective immediate basins of attractions.
\begin{theorem}
	The immediate basins $\mathcal{A}_{  -1}$ and $\mathcal{A}_{  1}$ contain $(-\infty, -a_1)$ and $( a_1, \infty)$ respectively. In particular, these are unbounded and their respective boundaries contain a pole.
	\label{basins-of-one-minusone}
\end{theorem}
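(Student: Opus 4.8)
The plan is to analyse the real dynamics of $C_a$ on $(a_1,\infty)$ first and then transfer the conclusion to the Fatou component. Using the factored form in Equation~\ref{Comp x}, on the interval $(a_1,\infty)$ every factor appearing in the numerator and denominator other than $x^2-1$ is positive, since $x>a_1>\sqrt{\frac{a+1}{2}}>a_2>\sqrt{a}>a_3$ by Lemma~\ref{extraneous}. Hence $C_a(x)-x$ has the sign of $-(x^2-1)$, so that $C_a(x)>x$ on $(a_1,1)$ and $C_a(x)<x$ on $(1,\infty)$. Combining this with the fact (read off from Equation~\ref{deri}) that $C_a$ is increasing on every interval free of poles, I would conclude that both $(a_1,1)$ and $(1,\infty)$ are forward invariant and that the orbit of each point of $(a_1,\infty)$ is monotone and bounded; hence it converges to the unique fixed point $1$ lying in $(a_1,\infty)$.

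Consequently $(a_1,\infty)$ is contained in the basin of attraction of the superattracting fixed point $1$, an open subset of the Fatou set. Being connected and containing $1\in\mathcal A_{1}$, the ray $(a_1,\infty)$ must lie in the single Fatou component $\mathcal A_{1}$, which proves $(a_1,\infty)\subseteq\mathcal A_{1}$ and, in particular, that $\mathcal A_{1}$ is unbounded. Since $C_a$ is odd (Lemma~\ref{basic-cheby}(1)), the relation $C_a(-z)=-C_a(z)$ gives $\mathcal A_{-1}=-\mathcal A_{1}$, and therefore $(-\infty,-a_1)=-(a_1,\infty)\subseteq\mathcal A_{-1}$, so $\mathcal A_{-1}$ is unbounded as well.

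For the last assertion I would show that a pole lies on $\partial\mathcal A_{1}$. As $\mathcal A_{1}$ is unbounded whereas $\infty$ is a repelling fixed point (hence in $\mathcal J(C_a)$), we have $\infty\in\partial\mathcal A_{1}$. Now $\mathcal A_{1}$ is forward invariant and the restriction $C_a\colon\mathcal A_{1}\to\mathcal A_{1}$ is a proper holomorphic map; since $\mathcal A_{1}$ contains the critical point $1$, this map is non-injective and so has degree $d\ge 2$. I would then follow, inside $\mathcal A_{1}$, the $d$ preimages (with multiplicity) of the sequence $z_n=n\in(a_1,\infty)$. As $z_n\to\infty\in\partial\mathcal A_{1}$, these preimages cannot remain in a compact subset of $\mathcal A_{1}$, so each of their limit points belongs to $C_a^{-1}(\infty)=\{0,\pm\sqrt{\frac{a+1}{2}},\infty\}$. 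Because $\infty$ is a simple fixed point, $C_a$ is injective near $\infty$, so at most one preimage stays close to $\infty$; the remaining $d-1\ge 1$ preimages stay bounded away from $\infty$ and hence accumulate at a finite pole. That pole lies in $\overline{\mathcal A_{1}}\setminus\mathcal A_{1}$, i.e. on $\partial\mathcal A_{1}$, and the symmetry $z\mapsto -z$ transfers the conclusion to $\partial\mathcal A_{-1}$.

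The sign and monotonicity bookkeeping is routine; the main obstacle is the final step, where I must be sure that the immediate basin is forward invariant with a proper self-map of degree at least two and that the local degree of $C_a$ at $\infty$ equals one, so that genuinely new finite preimages of $\infty$ are forced onto $\partial\mathcal A_{1}$ rather than all preimages collapsing back to $\infty$.
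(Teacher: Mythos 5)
Your proof is correct, and for the main claims it follows the paper's own route: the paper also uses the factorization in Equation~\ref{Comp x} together with the ordering of $a_1,a_2,a_3$ from Lemma~\ref{extraneous} and the monotonicity of $C_a$ on pole-free real intervals to get monotone orbits converging to a root, the only difference being that the paper works on $(-\infty,-a_1)$ (split at $-1$) and transfers to $\mathcal{A}_1$ by oddness, while you work on $(a_1,\infty)$ (split at $1$) and transfer to $\mathcal{A}_{-1}$. The genuine divergence is the final assertion about the pole: the paper disposes of it in one line by invoking Lemma~4.3 of \cite{Nayak-Pal2022}, which gives a pole on the boundary of any unbounded invariant Fatou component, whereas you reprove that fact from scratch. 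Your argument --- $\infty\in\partial\mathcal{A}_1$ is a repelling (hence non-critical) fixed point, $C_a\colon\mathcal{A}_1\to\mathcal{A}_1$ is a proper self-map of degree $d\ge 2$ because $\mathcal{A}_1$ contains the critical point $1$, so of the $d$ preimages in $\mathcal{A}_1$ of a sequence tending to $\infty$ at most one can lie in a neighbourhood of $\infty$ on which $C_a$ is injective, forcing the others to accumulate at a finite point of $C_a^{-1}(\infty)$ lying on $\partial\mathcal{A}_1$ --- is valid, resting on the standard facts that the image of a Fatou component of a rational map is a Fatou component and that the restriction to an invariant component is proper. What each approach buys: the paper's citation is shorter and reuses machinery already established in its companion paper; your version is self-contained and makes visible exactly why unboundedness plus invariance plus a repelling fixed point at $\infty$ forces a finite pole onto the boundary, which is essentially the content (and presumably the proof) of the cited lemma.
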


\begin{proof} 
It follows from Equation~\ref{Comp x} that for every $x<-1$,  $C_a(x)-x>0$. Since  $C_a : (-\infty,-1) \to (-\infty,-1)$ is strictly increasing, $C_a^n(x)>x$ for every $n\in \mathbb{N}$. This implies that $\{C_a^n(x)\}_{n >0}$ converges to $-1$ for every $x\in (-\infty,-1)$, i.e., $(-\infty,-1]\subset \mathcal{A}_{-1}$.
Now $C_a$ maps $ [-1, -a_1)$ onto itself and $C_a(x)<x$ by Equation~\ref{Comp x}. Since $C_a$ is strictly increasing in this interval, $\lim\limits_{n\to \infty} C_a^n(x)=-1$ for all $x\in [-1, -a_1)$. 
In other words, $(-\infty, -a_1] \subset \mathcal{A}_{-1}$ and in particular $\mathcal{A}_{-1}$ is unbounded.

Since $\mathcal{A}_{-1}$ is invariant and unbounded, it follows from Lemma~4.3 \cite{Nayak-Pal2022} that its boundary  contains a pole.
   \begin{figure}[h!]
	\begin{subfigure}{.6\textwidth}
		\centering
		\includegraphics[width=0.775\linewidth]{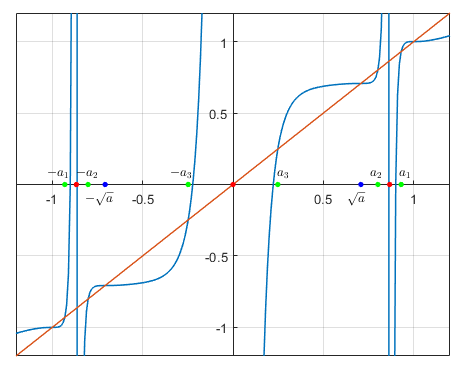}
	\caption{The graph of $C_{0.5}$ }
	\label{plot C}
	\end{subfigure}
	\hspace{-2.38cm}
	\begin{subfigure}{.6\textwidth}
		\centering
		\includegraphics[width=0.775\linewidth]{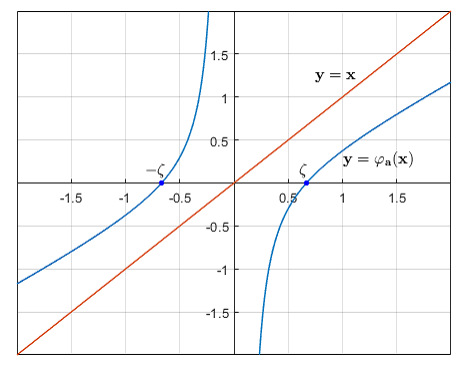}
	\caption{The graph of $\varphi_{- 0.5}$  }
	\label{phi}
	\end{subfigure}
	\caption{The graphs of $C_{a} $ and $\varphi_{a}$ for $a=0.5$ }
	\label{graphs-positive}
\end{figure}
 As $C_a$ is odd, $[1,\infty)\subset \mathcal{A}_1$ and  $\mathcal{A}_1$ is also unbounded. Further, its boundary contains a pole.
\end{proof}
  
\begin{Remark}
	\begin{enumerate}
		\item A similar analysis using Equation~\ref{Comp x} as done in the proof of Theorem\ref{basins-of-one-minusone} gives that $(-a_2, -a_3)\subset \mathcal{A}_{-\sqrt{a}}$ and $(a_3, a_2)\subset \mathcal{A}_{\sqrt{a}}$.
		\item Since $C_a$ is strictly increasing in $(-a_1,- \sqrt{\frac{a+1}{2}}) $ and $C_{a}(-a_1)=-a_1$, the left hand limit of $C_a$ at $- \sqrt{\frac{a+1}{2}} $ is $=+\infty$. Hence $C_a$ has a unique root in  $(-a_1,- \sqrt{\frac{a+1}{2}}) $. Now $(-a_2, -a_3)$ is contained in $\mathcal{A}_{-\sqrt{a}} $ and therefore does not contain any root of $C_a$ (as roots are in the Julia set). Repeating the same argument for $C_a$ in $(-a_3, 0)$, it is found that $C_a$ has a unique  root between  $-a_3$ and $0$. Since $C_a$ is odd, it has two positive  roots, one in  $(0,a_3)$ and the other in  $(\sqrt{\frac{a+1}{2}}, a_1)$. 
	\end{enumerate}
	\label{basins-roots}
	\end{Remark}

   Recall from Equation \ref{imm axis} that $C_a(iy)=i\varphi_{a}(y)$ where $$\varphi_{a}(y)=	  \frac{42y^{10}-Ay^8+By^6-Cy^4+Dy^2-a^2(a+1)}{8y^3\{2y^2+(a+1)\}^3}.$$ Here $A,B,C,D$ are as given in Equation~\ref{Cheby-form}. Further,  $\varphi_{a}$ is a real-valued function defined on the real axis and $\varphi_a=\alpha^{-1}\circ C_a \circ \alpha$, where $\alpha(y)=iy$. Note that for all real non-zero $y$, 
   \begin{equation}\label{deri-imag1}
   \varphi_{a}'(y)=\frac{3(y^2+1)^2(y^2+a)^2\{28y^4+8(a+1)y^2+(a+1)^2\}}{8y^4\{2y^2+(a+1)\}^4}>0.
   \end{equation}
   It is enough to study   the function $\varphi_a$ on the real line  for understanding   $C_a$ on the imaginary axis. 
   
  First we look at the possible zeros of $C_a$ on the imaginary axis.
  \begin{lemma}
  	The function $C_a$ has exactly two purely imaginary roots. 
  \end{lemma}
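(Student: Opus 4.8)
The plan is to reduce the question to counting the real zeros of the auxiliary function $\varphi_a$. Since $C_a(iy)=i\varphi_a(y)$ for real $y$, a purely imaginary number $iy$ is a root of $C_a$ precisely when $\varphi_a(y)=0$. Because $C_a$ is odd, $\varphi_a$ is an odd function, so its real zeros occur in pairs $\pm y_0$. It therefore suffices to prove that $\varphi_a$ has exactly one zero in $(0,\infty)$.

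First I would note that $\varphi_a$ has no pole in $(0,\infty)$: the only real zero of its denominator $8y^3\{2y^2+(a+1)\}^3$ is $y=0$, since $2y^2+(a+1)>0$ for all real $y$ when $0<a<1$. Hence $\varphi_a$ is smooth on $(0,\infty)$, and by Equation~\ref{deri-imag1} its derivative is strictly positive there, so $\varphi_a$ is strictly increasing on $(0,\infty)$.

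Next I would pin down the behaviour at the two ends of the interval. As $y\to 0^+$ the numerator tends to $-a^2(a+1)<0$ while the denominator tends to $0^+$, so $\varphi_a(y)\to-\infty$; and as $y\to+\infty$ the degree of the numerator exceeds that of the denominator, forcing $\varphi_a(y)\to+\infty$. Combining strict monotonicity with the intermediate value theorem yields a unique zero $y_0\in(0,\infty)$, and oddness then produces $-y_0$ as the unique zero in $(-\infty,0)$. Thus $C_a$ has exactly the two purely imaginary roots $\pm i y_0$.

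I do not anticipate a genuine obstacle, as the argument rests entirely on the sign-definite derivative already recorded in Equation~\ref{deri-imag1} together with the two boundary limits. The only point requiring care is to keep the zeros of $\varphi_a$ separated from its pole at the origin, which is guaranteed by $2y^2+(a+1)>0$. As an independent check one could instead set $w=y^2$ and apply Descartes' rule of signs to the quintic $42w^5+51(a+1)w^4+4(5a^2+3a+5)w^3+3(a^3-7a^2-7a+1)w^2-6a(a^2+3a+1)w-a^2(a+1)$: for every $a\in(0,1)$ its coefficient sequence exhibits exactly one sign change, whatever the sign of $a^3-7a^2-7a+1$, again forcing a single positive root and hence exactly two purely imaginary roots of $C_a$.
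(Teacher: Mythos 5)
Your proof is correct and takes essentially the same route as the paper's: both reduce the question to the real function $\varphi_a$, combine the strict monotonicity recorded in Equation~\ref{deri-imag1} with $\lim_{y\to 0^+}\varphi_a(y)=-\infty$ and a point of positivity (the paper evaluates the numerator at $y=1$, whereas you use $\varphi_a(y)\to+\infty$ as $y\to+\infty$), and then invoke oddness to get exactly one zero on each half-axis. The Descartes'-rule computation is a valid independent check (the sign pattern $+,+,+,\pm,-,-$ indeed gives one sign change in every case), but it is not needed for the argument.
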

 
 \begin{proof}
 	Consider $q(x)=42x^{5}-Ax^4+Bx^3-Cx^2+Dx-a^2(a+1)$. Then $q(0)=-a^2(a+1)<0$ and $q(1)=42-A+B-C+D-a^2(a+1)=4\{29+5a(1-a)+a(4-a^2)\}>0$. Since $\lim\limits_{y \to 0^+} \varphi_{a}(y)=-\infty$, there is an $y_0 \in (0,1)$ such that  $\varphi_a(y_0)<0$.
   As $\varphi_a(1)=\frac{q(1)}{8(2+a+1)^3}>0$, $\varphi_a$  has a root in $(0,1)$. This root is unique as $\varphi_a$ is strictly increasing by Equation \ref{deri-imag1}. It follows from the discussion preceding this lemma that $C_a$ has two purely imaginary roots.
 \end{proof}
Here is a remark.
  \begin{Remark}
 Note that $0$ is   a critical point of $\varphi_a$, and $\varphi_a$ is an increasing function on the negative real axis. As $C_a$ has no fixed point on the imaginary axis, the same is true for $\varphi_a$. Since $\lim\limits_{y \to 0^-} \varphi_a(y)=+\infty$, $\varphi_{a}(y)> y$ for all  $y<0$. As $\varphi_a$ is an odd function,  $\varphi_a(y)<y$ for all  $y>0$ (see Figure \ref{phi}).
 \label{varphi-on-imaginary}
  \end{Remark}
Though the imaginary axis does not contain any fixed point of $C_a$, the existence of periodic or pre-periodic points in the imaginary axis can not be ruled out.
\begin{obs}
	The imaginary axis contains two-periodic points of $C_a$ and those are repelling.
\end{obs}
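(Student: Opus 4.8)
The plan is to transfer the whole question to the real dynamics of $\varphi_a$ via the conjugacy $\varphi_a=\alpha^{-1}\circ C_a\circ\alpha$ with $\alpha(y)=iy$, so that a period-two point of $C_a$ on the imaginary axis is exactly a period-two point of $\varphi_a$ on the real line. Since $\varphi_a$ is odd, the natural candidates are the symmetric orbits: any $y_0>0$ with $\varphi_a(y_0)=-y_0$ automatically satisfies $\varphi_a(-y_0)=-\varphi_a(y_0)=y_0$, so $\{y_0,-y_0\}$ is a $2$-cycle of $\varphi_a$ and $\{iy_0,-iy_0\}$ is a $2$-cycle of $C_a$. First I would prove existence of such a $y_0$. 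Setting $G(y)=\varphi_a(y)+y$, Remark~\ref{varphi-on-imaginary} together with oddness gives $\varphi_a(y)\to-\infty$ as $y\to0^+$, hence $G(0^+)=-\infty$, while the degree count in the definition of $\varphi_a$ gives $\varphi_a(y)\sim\tfrac{21}{32}y$ as $y\to+\infty$, hence $G(y)\to+\infty$; the Intermediate Value Theorem then yields a root $y_0\in(0,\infty)$. Because $G'(y)=\varphi_a'(y)+1>0$ by Equation~\ref{deri-imag1}, $G$ is strictly increasing on $(0,\infty)$, so $y_0$ is unique. Moreover $\varphi_a$ has no real fixed point (Remark~\ref{varphi-on-imaginary}), so $y_0\neq-y_0$ and the orbit is genuinely of period two.

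Next I would locate $y_0$. Since $\varphi_a(y_0)=-y_0<0$ and $\varphi_a$ is negative on $(0,\hat y)$ and positive on $(\hat y,\infty)$, where $\hat y\in(0,1)$ is the zero of $\varphi_a$ from the preceding lemma, we get $0<y_0<\hat y<1$. Clearing denominators in $\varphi_a(y)=-y$ and substituting $w=y^2$ converts the defining equation into a quintic $P(w)=0$ with real coefficients; for $0<a<1$ these coefficients have the sign pattern $+,+,+,+,-,-$, so Descartes' rule confirms that $w_0=y_0^2$ is the unique positive root and hence that $\{iy_0,-iy_0\}$ is the only symmetric $2$-cycle on the imaginary axis. (Any $2$-cycle must straddle the single real pole of $\varphi_a$ at $0$, since an increasing map has no $2$-cycle inside an interval of continuity, which keeps the count of period-two points under control.)

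For the repelling assertion, conjugation by the rotation $\alpha$ preserves derivatives, so the multiplier of $\{iy_0,-iy_0\}$ equals $\varphi_a'(y_0)\,\varphi_a'(-y_0)=(\varphi_a'(y_0))^2$, using that $\varphi_a'$ is even. Thus the cycle is repelling precisely when $\varphi_a'(y_0)>1$. Using the closed form from Equation~\ref{deri-imag1}, this is the inequality
\[
3(w_0+1)^2(w_0+a)^2\bigl\{28w_0^2+8(a+1)w_0+(a+1)^2\bigr\}>8w_0^2\bigl(2w_0+(a+1)\bigr)^4 .
\]
I would establish it by combining the relation $P(w_0)=0$ with an explicit localization of $w_0$: bounding $w_0$ inside an $a$-dependent interval on which the difference of the two sides is positive, or equivalently checking that $\Phi(w)=3(w+1)^2(w+a)^2\{28w^2+8(a+1)w+(a+1)^2\}-8w^2(2w+(a+1))^4$ is positive at the root of $P$, via a sign/resultant analysis of the pair $(P,\Phi)$.

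The hard part is exactly this last inequality $\varphi_a'(y_0)>1$. Unlike the real extraneous fixed points, where Lemma~\ref{multiplier-extraneous} bounds the multiplier below by a constant uniformly in $a$, here $\varphi_a'$ is \emph{not} bounded below by $1$ on all of $(0,\infty)$ (its value tends to $\tfrac{21}{32}<1$ at infinity), so the estimate must genuinely exploit that $w_0$ is small---an order-of-magnitude analysis of $P$ suggests $w_0=O(a)$---placing it in the region where $\varphi_a'$ exceeds $1$. Localizing $w_0$ sharply enough, uniformly for $a\in(0,1)$, and verifying the displayed polynomial inequality there, is the main obstacle; by comparison the existence and uniqueness steps are routine.
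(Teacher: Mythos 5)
Your existence argument is sound: transferring the problem to $\varphi_a$ and producing the symmetric cycle from a sign change of $G(y)=\varphi_a(y)+y$, with $G'=\varphi_a'+1>0$ from Equation~\ref{deri-imag1} giving uniqueness, is correct, and it is arguably a cleaner existence proof than the paper's. But the Observation makes two claims, and you have proved only the first. The repelling assertion is precisely what you leave open: you reduce it to $\varphi_a'(y_0)>1$, equivalently to positivity of your $\Phi$ at the root $w_0$ of your $P$, and then declare that localizing $w_0$ well enough to verify this ``is the main obstacle.'' That is a genuine gap, not a routine verification one may skip. As you yourself observe, $\varphi_a'$ tends to $\tfrac{21}{32}<1$ at infinity; concretely, in your displayed inequality the right-hand side grows like $128w_0^6$ against $84w_0^6$ on the left, so the inequality genuinely fails for large $w_0$, and nothing short of a quantitative, $a$-uniform localization of $w_0$ (which you do not supply) can close the argument along your route.

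The paper avoids any multiplier estimate by a structurally different mechanism, and this is exactly why its proof is complete where yours is not. With $\zeta$ the positive zero of $\varphi_a$ and $I=(0,\zeta)$, one has $\varphi_a(I)=(-\infty,0)$ and $\varphi_a\bigl((-\infty,0)\bigr)=\mathbb{R}$, so $\varphi_a^2$ maps the bounded interval $I$ bijectively onto $\mathbb{R}$; the paper takes the inverse branch $g$ of $\varphi_a^2$ with $g(\mathbb{R})=I$, asserts it is a contraction, and applies the Contraction Mapping Principle. The resulting fixed point of $\varphi_a^2$ in $I$ is not fixed by $\varphi_a$ (Remark~\ref{varphi-on-imaginary}), and, being attracting for the inverse branch $g$, it is automatically repelling for $\varphi_a^2$: repulsion comes for free from the construction, with no derivative computation at the periodic point. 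Your construction instead produces the point first (via the Intermediate Value Theorem for $\varphi_a(y)=-y$) and must then estimate its multiplier afterwards, which is where you get stuck. The repair is easy once you see this: your $y_0$ satisfies $\varphi_a(y_0)=-y_0<0$, hence $y_0\in I$, so it is the fixed point of the contracting inverse branch, and the paper's argument applies to it verbatim; alternatively you must actually carry out the sign/resultant analysis of $(P,\Phi)$ that you only sketch.
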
  
\begin{proof}
Let $\zeta$ be the positive root of $\varphi_a$ and $I=(0,\zeta)$. Then $\varphi_a(I)=(-\infty,0)$ and $\varphi_a(\varphi_a(I))=\mathbb{R}$. In fact, $\varphi_{a}^2$ maps $I$ bijectively onto $\mathbb{R}$. The branch $g$   of $(\varphi_{a}^2)^{-1}$ such that $g(\mathbb{R})=I$  is a contraction. By the Contraction Mapping Principle, $\varphi_{a}^2$ has a fixed point in $I$. As $\varphi_{a}$ does not have any fixed point on the real line by Remark~\ref{varphi-on-imaginary}, this fixed point of $\varphi_{a}^2$ is a  two periodic point of  $\varphi_a$. Further, this is attracting for $g$ and hence repelling for $\varphi_{a}^2$. Since $\varphi_{a}(y)=-i C_a (iy)$, $C_a$ has a repelling $2$-periodic point on the positive imaginary axis.
\end{proof}
\begin{Remark}
	Using similar arguments, it can be seen that $\varphi_{a}$ has a two periodic point in $(\zeta,+\infty)$. Indeed, this is in the same cycle of the two periodic point mentioned in the above lemma.
\end{Remark}
 As $\varphi_{a}'(y)$ is real for every real $y$, $C_a '(z)$ is a real number for every purely imaginary $z$. Therefore, the periodic points of $C_a$ lying on the imaginary axis can not be irrationally indifferent. Thus, these are attracting, rationally indifferent or repelling. If these are repelling then we  have an important consequence. 
  \begin{lemma}
  	If all the periodic points of $C_a$ lying on the imaginary axis are repelling then the imaginary axis is in the Julia set of $C_a$. 
  \end{lemma}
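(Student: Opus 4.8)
The plan is to argue by contradiction: assuming every periodic point of $C_a$ on the imaginary axis is repelling, I would show that no purely imaginary point can lie in the Fatou set. To set up, I would first record the points that are already settled: by Lemma~\ref{preserve-axes} the imaginary axis is invariant under $C_a$, the point $\infty$ is a repelling fixed point (hence in $\mathcal{J}(C_a)$), and the origin is a pole already known to lie in the Julia set. Thus it suffices to treat a purely imaginary point $iy_0$ with $y_0\neq 0$.

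Suppose such an $iy_0$ lies in the Fatou set, and let $U$ be the Fatou component containing it. Since $U$ meets the imaginary axis, Lemma~\ref{preserve-axes}(1) forbids $U$ from landing on a rotation domain, so by Sullivan's No Wandering theorem $U$ lands on a periodic component $V$ which is either an attracting or a parabolic domain. Because the imaginary axis is forward-invariant, every forward iterate of $iy_0$ remains purely imaginary; as $C_a^{k}(U)=V$ for some $k\geq 0$, the image $C_a^{k}(iy_0)$ is a purely imaginary point of $V$, so $V$ again intersects the imaginary axis.

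Next I would pin down the periodic point attached to $V$. Choose a purely imaginary point $iy_1\in V$ and let $p$ be the period of $V$. In the attracting case $C_a^{np}(iy_1)$ converges to the attracting periodic point of $V$, and in the parabolic case it converges to the parabolic periodic point on $\partial V$. In either case the entire orbit $\{C_a^{np}(iy_1)\}_{n>0}$ stays on the imaginary axis, so its limit lies in the closed set $\overline{i\mathbb{R}}=i\mathbb{R}\cup\{\infty\}$. This limiting periodic point is attracting or parabolic, hence of multiplier with modulus at most one, whereas $\infty$ is repelling with multiplier $\frac{32}{21}$; therefore the limit is not $\infty$ and must be a genuine purely imaginary periodic point that is attracting or indifferent. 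This contradicts the hypothesis that every periodic point on the imaginary axis is repelling, and so no purely imaginary point lies in $\mathcal{F}(C_a)$, giving $i\mathbb{R}\subseteq\mathcal{J}(C_a)$.

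The step I expect to need the most care is identifying the limit of the iterates as an honest periodic point lying \emph{exactly} on the imaginary axis. This rests on three ingredients working together: the invariance of the imaginary axis (so the orbit never leaves it), the closedness of $i\mathbb{R}\cup\{\infty\}$ in $\widehat{\mathbb{C}}$ (so the limit cannot escape the axis), and the repelling multiplier of $\infty$ (which removes the only candidate on the axis that is not a finite purely imaginary number). Once this is secured, the contradiction follows immediately from the classification of periodic Fatou components together with Lemma~\ref{preserve-axes}(1).
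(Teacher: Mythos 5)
Your proposal is correct and follows essentially the same route as the paper: a contradiction via Sullivan's theorem, ruling out rotation domains with Lemma~\ref{preserve-axes}(1), and using the invariance of the imaginary axis to force the attracting/parabolic periodic point onto that axis. The only difference is that you carefully justify the step the paper asserts without detail (that the periodic point is a finite purely imaginary point, excluding $\infty$ via its repelling multiplier $\tfrac{32}{21}$), which is a welcome tightening rather than a different argument.
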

\begin{proof}
Suppose on the contrary that there is a Fatou component $U'$ intersecting the imaginary axis. Let $U$ be the periodic Fatou component on which  $U' $ lands. Then $U$ intersects the imaginary axis and by Lemma~\ref{preserve-axes}, $U$ can not be a rotation domain.  The other possibility that $U$ is an attracting domain or a parabolic domain would imply that the corresponding attracting or parabolic periodic point must be purely imaginary, which is contrary to the hypothesis. This completes the proof.
\end{proof}

 Though each multiply connected Fatou component is restricted in the sense that it lands on a Fatou component intersecting the real axis, their existence can not be completely ruled out. We are able to show that not all immediate basins of superattracting fixed points of $C_a$ corresponding to the roots of $p_a$ are multiply connected.   
\begin{theorem}
	At least two immediate basins of attraction corresponding to the roots of $p_a$ are simply connected.
	\label{atleast-two-scbasins}
\end{theorem}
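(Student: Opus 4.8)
The plan is to locate the four free critical points of $C_a$ relative to the four immediate basins and then exploit the symmetries of the dynamical plane. Recall from Lemma~\ref{basic-cheby} that the only critical points of $C_a$ besides the four roots of $p_a$ and the three poles are the four free critical points $c_1,\overline{c_1},-c_1,-\overline{c_1}$ (using $c_2=\overline{c_1}$), and that these are neither real nor purely imaginary, hence are four distinct points sitting at the corners of a rectangle symmetric about both axes. Since the poles lie in the Julia set, the only critical point that can lie in a given immediate basin $\mathcal{A}_r$ with $r\in\{\pm 1,\pm\sqrt a\}$, apart from the superattracting fixed point $r$ itself, must be one of these four free critical points. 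By B\"ottcher's theorem, if $\mathcal{A}_r$ contains no critical point other than $r$, then the B\"ottcher coordinate extends to a conformal isomorphism of $\mathcal{A}_r$ onto the unit disk, so $\mathcal{A}_r$ is simply connected. Thus it suffices to prove that at least two of the four basins contain none of the free critical points.

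First I would record the relevant symmetries. Because $C_a$ is odd (Lemma~\ref{basic-cheby}(1)), the map $z\mapsto -z$ conjugates $C_a$ to itself and therefore carries $\mathcal{A}_r$ onto $\mathcal{A}_{-r}$; in particular $\{\mathcal{A}_1,\mathcal{A}_{-1}\}$ and $\{\mathcal{A}_{\sqrt a},\mathcal{A}_{-\sqrt a}\}$ are two symmetric pairs of distinct, disjoint basins, and $c_1\in\mathcal{A}_r$ forces both $-c_1$ and $-\overline{c_1}$ into $\mathcal{A}_{-r}$. Since each root $r$ is real, Lemma~\ref{symmetry} shows that $\mathcal{A}_r$ is symmetric about the real axis, so $c_1\in\mathcal{A}_r$ forces $\overline{c_1}\in\mathcal{A}_r$ as well. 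Hence the free critical points can enter the root basins only in conjugate pairs, the two available pairs being $\{c_1,\overline{c_1}\}$ and $\{-c_1,-\overline{c_1}\}$.

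Then the counting closes quickly. If $c_1$ lies in some immediate basin $\mathcal{A}_r$ of a root, the two observations above place $\{c_1,\overline{c_1}\}$ in $\mathcal{A}_r$ and $\{-c_1,-\overline{c_1}\}$ in $\mathcal{A}_{-r}$, which already exhausts all four free critical points; consequently the complementary symmetric pair of basins contains none of them. If instead $c_1$ lies in no immediate basin of a root (being in the Julia set or in some other Fatou component), then by the same symmetries none of the four free critical points lies in a root basin, so all four basins are free of them. In either case at least two immediate basins, indeed a full symmetric pair, contain only their own superattracting fixed point as a critical point, and are therefore simply connected.

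The main point to handle carefully is the mechanism of the first paragraph: pinning down exactly which critical points can enter an immediate basin (crucially, that the poles are in the Julia set and so never belong to a basin) and justifying, via the B\"ottcher coordinate, that the absence of extra critical points forces simple connectivity. Once this is in place, the symmetry bookkeeping, namely that complex conjugation keeps a conjugate pair together inside a real-root basin while $z\mapsto -z$ sends the remaining pair to the mirror basin, does the rest, and no delicate estimate on the location of the free critical values is required.
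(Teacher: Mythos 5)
Your proposal is correct and follows essentially the same route as the paper: both arguments note that the only critical points available to an immediate basin besides its own root are the four free critical points, use Lemma~\ref{symmetry} (conjugation symmetry of basins of real roots) together with the $z\mapsto -z$ symmetry to show that if one free critical point enters a basin then all four are absorbed by a symmetric pair of basins, and conclude that the remaining pair is simply connected via the B\"ottcher-coordinate theorem (the paper's citation of Theorem 3.9 of Milnor). Your version just makes the case analysis and the pairing $\{c_1,\overline{c_1}\}$, $\{-c_1,-\overline{c_1}\}$ more explicit.
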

\begin{proof}
	If none of the  immediate basins $\mathcal{A}_{ 1}, \mathcal{A}_{ -1}, ~\mathcal{A}_{ \sqrt{a}}, \mathcal{A}_{ -\sqrt{a}}$ contain any   free critical point   then these are simply connected by Theorem 3.9~\cite{Milnor}.
	\par 
	If there is a free critical point say $\eta$  in an immediate basin of attraction $\mathcal{A}_\zeta$ of $\zeta\in \{-1,-\sqrt{a},\sqrt{a},1\}$ then $\bar{\eta} \in  \mathcal{A}_\zeta$. This is because each Fatou component containing a real number is symmetric about the real axis by Lemma~\ref{symmetry}. Further, it follows from the same lemma that $-\eta, -\bar{\eta} \in \mathcal{A}_{-\zeta}$.  Therefore, the other two immediate basins of superattracting fixed points contain no critical points other than the respective roots of $p_a$. Hence these two immediate basins of attraction (corresponding to the roots of $p_a$) are simply connected by Theorem 3.9~\cite{Milnor}.
\end{proof}
Here is a remark.
\begin{Remark}
 Let the boundary of $\mathcal{A}_1$ (or $\mathcal{A}_{-1}$) contain  a non-zero pole. If $\mathcal{A}_1$ is simply connected then   the Julia component containing this non-zero pole is unbounded and it follows from Corollary~\ref{connected-JS} that the Julia set is  connected.  If $\mathcal{A}_1$ is not simply connected then $\mathcal{A}_{\sqrt{a}}$ and $\mathcal{A}_{-\sqrt{a}}$  are simply connected by Theorem~\ref{atleast-two-scbasins}.
\end{Remark}
We have the following result about the symmetry group of the Julia set of $C_a$.
\begin{theorem}
	If the Fatou set of $C_a$ consists only   of the basins of attraction of the superattracting fixed points of $C_a$ then $\Sigma p_a=\Sigma C_a$.
	\label{symmetry-positive}
\end{theorem}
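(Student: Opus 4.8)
The plan is to show that $\Sigma p_a \subseteq \Sigma C_a$ always holds, and then establish the reverse inclusion under the stated hypothesis. The first inclusion should be essentially formal: since $p_a(z)=(z^2-1)(z^2-a)$ is even, its symmetry group $\Sigma p_a$ contains $z\mapsto -z$ (and is in fact $\{z\mapsto \pm z\}$, since the normal form $p_a(z)=p_0(z^2)$ has $\beta=2$). Because $C_a$ is odd, $C_a(-z)=-C_a(z)$, so $z\mapsto -z$ conjugates $C_a$ to itself; hence it preserves the Julia set and lies in $\Sigma C_a$. So the content of the theorem is the opposite containment $\Sigma C_a \subseteq \Sigma p_a = \{z\mapsto \pm z\}$.

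For the reverse inclusion, I would exploit the hypothesis that $\mathcal{F}(C_a)$ is exactly the union of the four basins of attraction of the roots $\pm 1, \pm\sqrt{a}$. Let $\sigma$ be a holomorphic Euclidean isometry (so $\sigma(z)=\omega z + t$ with $|\omega|=1$) preserving $\mathcal{J}(C_a)$; equivalently $\sigma$ permutes the Fatou components. Under the hypothesis, $\sigma$ permutes the four full basins $\mathcal{A}_{\pm 1}\cup\cdots$, and in particular it must permute their ``centers'', the superattracting fixed points, since these are the only points in each basin whose entire backward orbit under $C_a$ stays in one basin — more cleanly, $\sigma$ conjugates $C_a$ to a map with the same Julia set, and a symmetry of the Julia set must send superattracting basins to superattracting basins, hence send the set of roots $\{\pm 1,\pm\sqrt a\}$ to itself. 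So $\sigma$ permutes the four roots. The key step is then a rigidity argument: an affine isometry $z\mapsto \omega z+t$ permuting a configuration of four points that is symmetric about the origin and lies on the coordinate axes is highly constrained.

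Concretely, I would argue that $\sigma$ must fix the centroid of the root set. The four roots $\pm 1,\pm\sqrt a$ have centroid $0$, so $\sigma(0)=0$, forcing $t=0$ and $\sigma(z)=\omega z$ with $|\omega|=1$. For positive $a$ all four roots are real and distinct in absolute value (the pairs $\{\pm 1\}$ and $\{\pm\sqrt a\}$ with $0<\sqrt a<1$), so the multiplication $z\mapsto \omega z$ must preserve the set $\{\pm 1,\pm\sqrt a\}$; since it preserves moduli it must fix the pair $\{\pm 1\}$ setwise and $\{\pm\sqrt a\}$ setwise, which forces $\omega\in\{1,-1\}$. Hence $\sigma\in\{z\mapsto z,\ z\mapsto -z\}=\Sigma p_a$, giving $\Sigma C_a\subseteq\Sigma p_a$ and, with the first paragraph, equality.

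I expect the main obstacle to be making precise the claim that a symmetry of the Julia set must permute the superattracting basins, and through them the roots, rather than merely the Fatou components abstractly. The clean way is: since $\sigma$ is an isometry with $\sigma(\mathcal{J}(C_a))=\mathcal{J}(C_a)$, the map $\widetilde{C}:=\sigma\circ C_a\circ\sigma^{-1}$ has the same Julia set as $C_a$, so $\sigma$ maps each Fatou component of $C_a$ onto a Fatou component of $\widetilde C$; under the hypothesis that every Fatou component is (an iterated preimage of) one of the four superattracting basins, $\sigma$ must carry attracting basins to attracting basins and hence the fast-escaping centers — the roots — to a set with the same dynamical role. I would need to verify that $\sigma$ intertwines the dynamics enough to send superattracting fixed points to superattracting fixed points; the finiteness and the explicit modulus constraint $|\omega|=1$ then close the argument. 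For the positive-parameter case the modulus separation of the roots makes this transparent, which is presumably why the theorem is stated here (the negative-$a$ case, where two roots are purely imaginary, requires the extra symmetry assumption mentioned in the introduction and is handled separately).
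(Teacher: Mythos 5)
Your first inclusion is fine: since $C_a$ is odd, $z\mapsto -z$ conjugates $C_a$ to itself, hence preserves $\mathcal{J}(C_a)$, and $\Sigma p_a=\{z\mapsto \pm z\}$ by the normal-form computation; so $\Sigma p_a\subseteq \Sigma C_a$. The genuine gap is at the central step of the reverse inclusion: the claim that $\sigma\in\Sigma C_a$ must permute the set of roots $\{\pm 1,\pm\sqrt a\}$. What you actually know is that $\sigma$, being a homeomorphism with $\sigma(\mathcal{J}(C_a))=\mathcal{J}(C_a)$, permutes the Fatou \emph{components}. It does not follow that $\sigma$ permutes the four \emph{basins}: a priori a component of the basin of $1$ could be sent to a component of the basin of $\sqrt a$, or an immediate basin to some far preimage component, and nothing forces centers to go to centers. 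Your ``cleaner'' version has the same flaw: $\widetilde C=\sigma\circ C_a\circ\sigma^{-1}$ does have the same Julia set, and $\sigma$ maps the superattracting fixed points of $C_a$ to those of $\widetilde C$ --- but $\widetilde C$ is a \emph{different} rational map, and sharing a Julia set imposes no identification of their fixed points; this says nothing about fixed points of $C_a$ itself. Your proposed intrinsic characterization of the centers is also incorrect: the entire backward orbit of \emph{every} point of the basin of an attracting fixed point stays in that basin (any preimage of an attracted point is attracted), so it singles out nothing; and in any case such characterizations are phrased in terms of $C_a$-dynamics, which $\sigma$ does not intertwine. Since your centroid argument (giving $t=0$) and your modulus argument (giving $\omega=\pm1$) both rest on the root-permutation claim, the proof as proposed does not go through.

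The paper circumvents exactly this difficulty by working with a property that an isometry \emph{does} preserve, namely unboundedness, instead of dynamical data. It first quotes Theorem 1.1 of \cite{Sym-and-dyn} that every element of $\Sigma C_a$ is a rotation about the origin (so $t=0$ comes from a cited structural theorem, not from a permutation of roots). It then combines the unboundedness of $\mathcal{A}_{\pm1}$ (Theorem \ref{basins-of-one-minusone}) with Lemma 3.2 of \cite{Sym-and-dyn} and the hypothesis on the Fatou set to conclude that every Fatou component other than the four immediate basins is bounded. A rotation about the origin carries the unbounded component $\mathcal{A}_1$ to an unbounded Fatou component, hence to one of the immediate basins; after ruling out $\mathcal{A}_{\pm\sqrt a}$ it gets $\sigma(\mathcal{A}_1)\in\{\mathcal{A}_1,\mathcal{A}_{-1}\}$, whence $\sigma=\pm\,\mathrm{id}$. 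To salvage your sketch you would need to replace ``symmetries send superattracting fixed points to superattracting fixed points'' by an argument of this geometric kind --- proving the rotation-about-origin statement and doing the bounded/unbounded component bookkeeping --- rather than appealing to a dynamical rigidity that equality of Julia sets simply does not provide.
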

\begin{proof} It is known that $\Sigma p_a \subseteq \Sigma C_a$ and every element of $\Sigma C_a$ is a rotation about the origin, the centroid of $p_a$ (Theorem~1.1.~\cite{Sym-and-dyn}). 
	It  is  shown in Theorem~\ref{basins-of-one-minusone} that the immediate basins $\mathcal{A}_{\pm 1}$ are unbounded. As $\infty$ is a repelling fixed point, by Lemma 3.2. \cite{Sym-and-dyn}, every Fatou component landing on these immediate basins $\mathcal{A}_{\pm 1}$ is bounded. If $\mathcal{A}_{\pm \sqrt{a}}$ are bounded then every Fatou component landing on these will be bounded as $\infty$ is a fixed point. On the other hand, if $\mathcal{A}_{\pm \sqrt{a}}$ are unbounded then Lemma 3.2. \cite{Sym-and-dyn} gives  that every Fatou component landing on $\mathcal{A}_{\pm \sqrt{a}}$ is bounded.
	Hence the Fatou set of $C_a$ contains at most four unbounded components.
	
	Let $\sigma \in \Sigma C_a$. Then $\sigma(\mathcal{A}_1)$ can not be equal to $\mathcal{A}_{-\sqrt{a}}$ or $\mathcal{A}_{\sqrt{a}}$ and therefore  $\sigma(\mathcal{A}_{1})=\mathcal{A}_{1}$ or $ \mathcal{A}_{-1}$. Thus $\sigma$ is either the identity or $z\mapsto -z$. Since $z \mapsto -z$ is the only non-identity element of $\Sigma p_a$, we have  $\sigma \in  \Sigma p_a $.
	 Thus $ \Sigma C_a \subseteq \Sigma p_a$ and hence $ \Sigma C_a = \Sigma p_a$. 
\end{proof}

  \begin{Remark}
  	
  \begin{enumerate}
  	\item If $a>0$ and  the Fatou set of $C_a$ consists of only the basins of the fixed points of $C_a$ corresponding to the roots of $p_a$ (all of these are real) then there is no Fatou component intersecting the  imaginary axis. This is because the imaginary axis is invariant and   no Fatou component can intersect both the axes. Therefore, the imaginary axis is in the Julia set of $C_a$ where $a>0$ and $\mathcal{F}(C_a)$ consists of only the basins of the fixed points of $C_a$ corresponding to the roots of $p_a$.
  	\item It is not known whether the non-zero poles are on the boundary of $\mathcal{A}_{\pm 1}$ or not.
  	 
  	\item For $ a>0,$ it is believed (but yet not proved) that the immediate basins $\mathcal{A}_{\pm \sqrt{a}}$ are unbounded. This is supported by Figure \ref{Julia set}, which is generated using MatLab.
  \end{enumerate}
\end{Remark}
\begin{figure}[h!]
	\begin{subfigure}{.6\textwidth}
		\centering
		\includegraphics[width=0.75\linewidth]{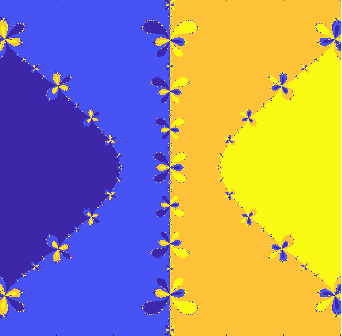}
		\caption{ $a=0.5$}
		\label{Julia set}
	\end{subfigure}
	\hspace{-2.0cm}
	\begin{subfigure}{.6\textwidth}
		\centering
		\includegraphics[width=0.75\linewidth]{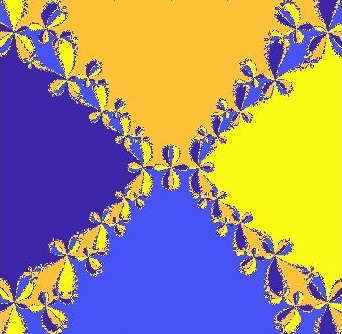}
		\caption{ $a=-0.5$}
		\label{Quartic_b}
	\end{subfigure}
	\caption{The Julia sets of $C_a $ }
	\label{JS}
\end{figure}
Figure \ref{Julia set} illustrates the Fatou and the Julia sets of $C_{0.5}$. The largest regions in deep blue, blue, yellow and deep yellow represent the immediate basins of attractions of $-1, -\sqrt{a},  \sqrt{a}$ and $1$ respectively. All the  smaller regions in deep blue belong to the basin (but not the immediate basin) of $-1$. Similar is the case of smaller regions in other three colours. The Julia set is the complement of the union of these four basins.  

\subsection{  Negative parameters}
We  are to deal with  $p_a (z)=(z^2 -1)(z^2-a)$ for $-1< a<0$.
Let $a=-b$ where $b\in (0,1)$ so that  $$p_{-b}(z)=(z^2-1)(z^2+b)$$ for $0<b<1$. Then the Chebyshev's method of of $p_{-b}$, denoted by $C_{-b}$, is defined as
\begin{equation}\label{Chebyshev2}
C_{-b}(z)=
\frac{42z^{10}+\tilde{A}z^8+\tilde{B}z^6+\tilde{C}z^4+\tilde{D}z^2+b^2(1-b)}{8z^3\{2z^2-(1-b)\}^3}
 \end{equation}
where $\tilde{A}=-51(1-b)$, $\tilde{B}=4(5b^2-3b+5)$, $\tilde{C}= 3(b^3+7b^2-7b-1)$ and $\tilde{D}=6b(b^2-3b+1)$. The  critical points of $C_{-b}$ are $\pm 1, \pm i\sqrt{b}, 0, \pm \sqrt{\frac{1-b}{2}}$, each with multiplicity two and the solutions of $28z^4-8(1-b)z^2+(1-b)^2=0$ (see Equation \ref{simple cr}) which are all simple.
If $z$ is such a solution then $z^2=\frac{2\pm i\sqrt{3}}{14}(1-b)$. 

\par  
We prove as in positive parameter case that all extraneous fixed points are repelling.
\begin{lemma}
	All the extraneous fixed points of $C_{a}$ for $-1< a<0$ are repelling.
	\label{extraneous-negative-parameter}
\end{lemma}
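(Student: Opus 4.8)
The plan is to split the six extraneous fixed points according to whether they are real or purely imaginary, dispose of the real ones by quoting Lemma~\ref{multiplier-extraneous}, and treat the purely imaginary ones by an analysis of the multiplier function restricted to the imaginary axis, exactly parallel to the one carried out on the real axis in Lemma~\ref{multiplier-extraneous}. So the first task is to pin down the location of all extraneous fixed points.

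First I would set $w=z^2$ and work with the cubic $f(w)=22w^3-23(a+1)w^2+(5a^2+16a+5)w-a(a+1)$, whose roots are the squares of the extraneous fixed points (Lemma~\ref{basic-cheby}(3)). Reusing the four evaluations already computed in the proof of Lemma~\ref{extraneous}, namely $f(a)=4a(a-1)^2$, $f(0)=-a(a+1)$, $f\!\left(\frac{a+1}{2}\right)=-\frac12(1-a)^2(1+a)$ and $f(1)=4(1-a)^2$, I would read off their signs for $-1<a<0$: since then $a<0<\frac{a+1}{2}<1$, one gets $f(a)<0$, $f(0)>0$, $f(\frac{a+1}{2})<0$ and $f(1)>0$. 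The three sign changes force $f$ to have one root in $(a,0)$ and two roots, in $(0,\frac{a+1}{2})$ and in $(\frac{a+1}{2},1)$. Hence all three roots of $f$ are real: the two positive roots give four real extraneous fixed points and the negative root gives a pair of purely imaginary ones, so there are no genuinely complex extraneous fixed points. The four real extraneous fixed points are repelling by Lemma~\ref{multiplier-extraneous}.

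It remains to handle the two purely imaginary extraneous fixed points. Writing $z=iy$ and $\xi^2=\frac{a+1}{2}>0$ in the multiplier formula (Equation~\ref{extra_mult}), a direct simplification gives
\begin{equation*}
\lambda(iy)=2\left[3-\frac{6y^2(y^2+\xi^2)}{(3y^2+\xi^2)^2}\right].
\end{equation*}
Setting $s=y^2/\xi^2\ge 0$, the fraction becomes $g(s)=\dfrac{6s(s+1)}{(3s+1)^2}$, which is independent of $\xi$ (hence of $a$). A single derivative computation shows $g$ increases on $[0,1]$ to its maximum $g(1)=\frac34$ and decreases thereafter toward the limit $\frac23$, so $0\le g(s)\le \frac34$ for all $s\ge 0$. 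Consequently $\lambda(iy)\ge 2(3-\frac34)=\frac92>1$ for every real $y$, and in particular the multiplier at each purely imaginary extraneous fixed point exceeds $1$. Combined with the previous paragraph, every extraneous fixed point of $C_a$ is repelling.

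The routine parts are the sign bookkeeping for $f$ and the elementary optimization of $g$; the one step requiring care is the algebraic reduction of Equation~\ref{extra_mult} along the imaginary axis into the clean, $\xi$-free form $g(s)$, which is what makes the bound $\lambda(iy)\ge\frac92$ transparent and mirrors the real-axis estimate of Lemma~\ref{multiplier-extraneous}.
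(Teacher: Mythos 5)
Your proof is correct, and its skeleton --- reducing to the cubic $f(w)$ in $w=z^2$, disposing of the four real extraneous fixed points by Lemma~\ref{multiplier-extraneous}, and then bounding the multiplier along the imaginary axis --- is the same as the paper's; the difference is in how that last bound is obtained, and your way is cleaner. The paper (working with $a=-b$, $b\in(0,1)$) first proves the location estimate $0<b_3<\sqrt{b}$ for the imaginary fixed points $\pm ib_3$, and then studies $\tilde{\lambda}(y)=2\left[3-\frac{6y^2(y^2+\tilde{\xi}^2)}{(3y^2+\tilde{\xi}^2)^2}\right]$, $\tilde{\xi}^2=\frac{1-b}{2}$, only on the interval $(0,\sqrt{b})$. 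Because the critical point $y=\tilde{\xi}$ of $\tilde{\lambda}$ may or may not lie inside that interval, this forces a case split ($0<b\le\frac{1}{3}$ versus $\frac{1}{3}<b<1$): in one case the minimum is $\tilde{\lambda}(\tilde{\xi})=\frac{9}{2}$, in the other an auxiliary optimization of $\tilde{s}(b)=\frac{21b^2+6b+1}{(5b+1)^2}$ over $b$ yields the same constant. Your rescaling $s=y^2/\xi^2$ removes the parameter in one stroke: since $g'(s)=\frac{6(1-s)}{(3s+1)^3}$, the function $g$ peaks at $g(1)=\frac{3}{4}$, so $\lambda(iy)\ge 2\left(3-\frac{3}{4}\right)=\frac{9}{2}$ for \emph{every} real $y$. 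This avoids both the case analysis and any need for the location bound $b_3<\sqrt{b}$ (you only need that the imaginary extraneous fixed points have the form $iy$ with $y$ real, which your sign analysis of $f$ provides), and it proves the slightly stronger statement that the multiplier expression exceeds $1$ on the entire imaginary axis. Both routes arrive at the same sharp constant $\frac{9}{2}$; yours gets there with less bookkeeping.
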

\begin{proof}
	The extraneous fixed points are the solutions of Equation \ref{equ_extraneous}. Let $w=z^2$ and $f(w)=22w^3-23(1-b)w^2+(5b^2-16b+5)w+b(1-b)$. Then $f(-b)= -4b(1+b)^2<0$, $f(0)=b(1-b)>0$, $f(\frac{1-b}{2})=-\frac{1}{2}(1-b)(1+b)^2<0$
	and $f(1)=4(1+b)^2>0$.	Therefore $f$ has a root in each of the intervals $(-b,0),~(0,\frac{1-b}{2})$ and $(\frac{1-b}{2},1)$ and the square roots of these roots are precisely the extraneous fixed points of $C_{-b}$. There are four real and two purely imaginary  extraneous fixed points. If $b_1, b_2$ are the positive extraneous fixed points  in decreasing order then 
	$$-1<-b_1<-\sqrt{\frac{1-b}{2}}<-b_2<0<b_2<\sqrt{\frac{1-b}{2}}<b_1<1.$$
	It follows from Lemma~\ref{multiplier-extraneous} that these four (real) extraneous fixed points are repelling. 
	\par  
	If $ib_3, -ib_3$ are purely imaginary extraneous fixed points such that $b_3 >0$ then $ib_3$ is a square root of the negative root of $f$ (lying in $(-b,0)$) and therefore, 
	$$-\sqrt{b}<-b_3<0<b_3<\sqrt{b}.$$
	\par
%  As $\lambda$ is continuous and deceasing in $(\frac{\tilde{\xi}}{\sqrt{3}}, \infty)$, $\lambda(x)>>1$. As $\frac{\tilde{\xi}}{\sqrt{3}}<\tilde{\xi}$ and $\lambda(-x)=\lambda(x)$ for any $x\in \mathbb{R}$, $\lambda(x)>1$ for every $x\in \mathbb{R}$.
%
% Consider the real valued function $\lambda$, defined on real line by
%\begin{equation}
%\lambda(x)=2\left[3-\frac{6x^2\{x^2-\tilde{\xi}^2\}}{\{3x^2-\tilde{\xi}^2\}^2}\right].
%\end{equation}
%Then for any $x\in (-\tilde{\xi},\tilde{\xi})$, $\frac{6x^2\{x^2-\tilde{\xi}^2\}}{\{3x^2-\tilde{\xi}^2\}^2}<0$, hence $\lambda(x)>6$.
%\par 
%\par 
%For a real extraneous fixed point $x_0$ of $C_{-b}$, the multiplier is $\lambda(x_0)>1$. Thus all real extraneous fixed points of $C_{-b}$ are repelling.
\par   If $z=iy$ is an extraneous fixed point of $C_{-b}$ then its multiplier is given by  $2\left[3-\frac{6y^2\{y^2+\tilde{\xi}^2\}}{\{3y^2+\tilde{\xi}^2\}^2}\right]$ where $\tilde{\xi} =\sqrt{\frac{1-b}{2}}>0$. Let $\tilde{\lambda}:\mathbb{R}\to \mathbb{R}$ be    defined by $$\tilde{\lambda}(y)=2\left[3-\frac{6y^2\{y^2+\tilde{\xi}^2\}}{\{3y^2+\tilde{\xi}^2\}^2}\right].$$
Then 
\begin{equation} \tilde{\lambda}^{'}(y)=\frac{24\tilde{\xi}^2 y(y^2-\tilde{\xi}^2)}{(3y^2+\tilde{\xi}^2)^3}.
\label{derivative-lambda-tilde}
\end{equation}
Since $\tilde{\lambda}$ is even, it is enough to analyse it in $[0, +\infty)$. Since $C_{-b}^{'}(ib_3)=\tilde{\lambda}(b_3)$,  $ib_3$ is a repelling fixed point of $C_{-b}$ if and only if $|\tilde{\lambda}(b_3)|>1 $. We are going to establish this by showing that $\tilde{\lambda}(y)>1$ for all $y\in (0, \sqrt{b})$. This is so because $0 < b_3 <\sqrt{b}$. There are two cases depending on whether  $0< b \leq  \frac{1}{3}$ or  $\frac{1}{3} < b <1  $.

%e critical points of $\tilde{\lambda}$ are $0$ and $  \tilde{\xi}$.
\par 
If $0< b \leq  \frac{1}{3}$ then  $\tilde{\xi}^2 \geq b$ and  $\tilde{\lambda}'(y)<0$  for all $y\in (0, \sqrt{b})$. Therefore $\tilde{\lambda}$ is a decreasing function in $(0,\sqrt{b})$ and consequently, $\tilde{\lambda}(y)\geq \tilde{\lambda}(\sqrt{b})=6\left[\frac{21b^2+6b+1}{(5b+1)^2}\right].$ Letting $\tilde{s}(b)= \frac{21b^2+6b+1}{(5b+1)^2}$, it is seen that    $\tilde{s}^{'}(b)=\frac{12b -4}{(5b+1)^3}<0$ in   $(0,\frac{1}{3})$ and its minimum   value is attained at $b=\frac{1}{3}$. Therefore $\tilde{s}(b)\geq \tilde{s}(\frac{1}{3})=\frac{3}{4 }$ for all $0<b \leq \frac{1}{3}$. This gives that $\tilde{\lambda}(y)\geq \frac{9}{2}>1$ for all $y \in (0,\sqrt{b})$. 
\par 
If $\frac{1}{3} < b <1  $ then  $\tilde{\xi}^2<b$  and $\tilde{\lambda}$  has a critical point $\tilde{\xi}$ in the interval $(0,\sqrt{b})$ (see Equation~\ref{derivative-lambda-tilde}). Indeed, $\tilde{\lambda}$ decreases in $(0, \tilde{\xi})$ and then increases in $(\tilde{\xi}, \sqrt{b} )$ attaining its minimum at $\tilde{\xi}$. Therefore  $\tilde{\lambda}(y)>\tilde{\xi}=\frac{9}{2}>1$ for any $y\in (0,\sqrt{b})$.
\par 
  This concludes the proof.
\end{proof}
Unlike the case of positive parameter, all the superattracting basins corresponding to the roots of $p_a$ are found to be unbounded in this case.
\begin{theorem}\label{unbdd_imm}
All immediate basins corresponding to the superattracting fixed points of $C_{-b}$ are unbounded.
\end{theorem}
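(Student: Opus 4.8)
The plan is to exploit the explicit factorisation of $C_{-b}(z)-z$ together with the monotonicity of $C_{-b}$ along each coordinate axis, handling the two real roots $\pm1$ and the two purely imaginary roots $\pm i\sqrt b$ by parallel one–dimensional arguments. Write $b_1>b_2$ for the positive real extraneous fixed points and $\pm ib_3$ for the purely imaginary ones, so that by Lemma~\ref{extraneous-negative-parameter} one has $\sqrt{\frac{1-b}{2}}<b_1<1$, $\;b_2<\sqrt{\frac{1-b}{2}}$ and $0<b_3<\sqrt b$. Since both sides are rational functions with the same ten finite zeros $\pm1,\pm i\sqrt b,\pm b_1,\pm b_2,\pm ib_3$ (the fixed points of $C_{-b}$), the same poles $z=0,\pm\sqrt{\frac{1-b}{2}}$, and leading coefficient $-\frac{11}{32}$, the analogue of Equation~\ref{Comp x} for $a=-b$ reads
\[
C_{-b}(z)-z=-\frac{11\,(z^{2}-1)(z^{2}+b)(z^{2}-b_{1}^{2})(z^{2}-b_{2}^{2})(z^{2}+b_{3}^{2})}{32\,z^{3}\{z^{2}-\frac{1-b}{2}\}^{3}}.
\]
By Equation~\ref{deri} with $a=-b$ we have $C_{-b}'(x)\ge 0$ on $\mathbb R$, so $C_{-b}$ is strictly increasing on every pole-free real interval.

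First I would treat the real roots. On $(b_1,1)$ every numerator factor is positive except $(z^2-1)$, and the denominator is positive because $b_1>\sqrt{\frac{1-b}{2}}$ keeps the pole out of the interval; hence $C_{-b}(x)-x>0$ there. On $(1,\infty)$ all numerator factors are positive, so $C_{-b}(x)-x<0$. Together with $C_{-b}(1)=1$ and strict monotonicity, this forces $C_{-b}^{\,n}(x)\to 1$ for every $x\in(b_1,\infty)$: on $(b_1,1)$ the iterates increase to $1$ and on $(1,\infty)$ they decrease to $1$. The set $(b_1,\infty)$ is connected, lies in the Fatou set, and contains the fixed point $1$, so it is contained in $\mathcal A_1$; thus $\mathcal A_1$ is unbounded. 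Because $C_{-b}$ is odd, $(-\infty,-b_1)\subset\mathcal A_{-1}$ and $\mathcal A_{-1}$ is unbounded as well.

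Next I would treat the imaginary roots by passing to $\varphi_{-b}(y)=-iC_{-b}(iy)$. Substituting $z=iy$ in the displayed identity and using $\varphi_{-b}(y)-y=-i\bigl(C_{-b}(iy)-iy\bigr)$ gives
\[
\varphi_{-b}(y)-y=-\frac{11\,(y^{2}+1)(y^{2}-b)(y^{2}+b_{1}^{2})(y^{2}+b_{2}^{2})(y^{2}-b_{3}^{2})}{32\,y^{3}\{y^{2}+\frac{1-b}{2}\}^{3}}.
\]
The only real pole of $\varphi_{-b}$ is $y=0$, since $y^2+\frac{1-b}{2}$ has no real zero, and $\varphi_{-b}$ is increasing on $(0,\infty)$ by the analogue of Equation~\ref{deri-imag1}. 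For $y>\sqrt b$ both $(y^2-b)$ and $(y^2-b_3^2)$ are positive (as $b_3<\sqrt b$), every other factor is positive, and so $\varphi_{-b}(y)-y<0$. With $\varphi_{-b}(\sqrt b)=\sqrt b$ and monotonicity this yields $\sqrt b<\varphi_{-b}(y)<y$, so the iterates decrease to the unique fixed point $\sqrt b$ of $\varphi_{-b}$ in $[\sqrt b,\infty)$. Consequently $\{iy:y>\sqrt b\}\subset\mathcal A_{i\sqrt b}$, which is therefore unbounded, and Lemma~\ref{symmetry} (or oddness) gives the same for $\mathcal A_{-i\sqrt b}$.

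The hard part is not any single estimate but the sign bookkeeping, which rests entirely on the ordering of fixed points from Lemma~\ref{extraneous-negative-parameter}: one needs $b_1>\sqrt{\frac{1-b}{2}}$ so that no pole separates $b_1$ from $1$ on the real axis, and $b_3<\sqrt b$ so that $(\sqrt b,\infty)$ contains no further fixed point of $\varphi_{-b}$. A second, more conceptual point is the passage from \emph{``an unbounded real or imaginary half-line lies in the basin''} to \emph{``it lies in the immediate basin''}: each such half-line is connected, lies in the Fatou set since all its points converge to an attracting fixed point, and accumulates at (or contains) the corresponding root, hence lies in the single Fatou component through that root. Finally, the feature that makes the imaginary case as clean as the real one is that $\varphi_{-b}$, unlike $C_{-b}$, has only the single finite real pole $y=0$, so the half-line $(\sqrt b,\infty)$ is automatically pole-free.
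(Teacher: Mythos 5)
Your proposal is correct and follows essentially the same route as the paper: monotonicity of $C_{-b}$ on the real axis together with the sign of the factorized $C_{-b}(x)-x$ gives $[1,\infty)\subset\mathcal{A}_1$, the conjugated map $\varphi_{-b}$ handles the imaginary axis the same way, and symmetry (oddness / Lemma~\ref{symmetry}) transfers the conclusion to $\mathcal{A}_{-1}$ and $\mathcal{A}_{-i\sqrt{b}}$. The only differences are cosmetic: you justify the factorization by matching zeros, poles and leading coefficients (the paper reads it off from Equation~\ref{Chebyshev}), and you additionally treat $(b_1,1)$, which the paper defers to Remark~\ref{comp_imm} since it is not needed for unboundedness.
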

\begin{proof}
	Recall that the superattracting fixed points of $C_{-b}$ are $-1, 1, i \sqrt{b}$ and $-i \sqrt{b}$.   In view of Lemma~\ref{symmetry}, it is enough to prove that the immediate basins $\mathcal{A}_{1}$ and $\mathcal{A}_{ i \sqrt{b}} $ corresponding to $1$ and $ i \sqrt{b}$ respectively are unbounded.
\par 
To show that  $\mathcal{A}_{1}$ is unbounded, we need to analyse the iterative behavour of $C_{-b}$ on $\mathbb{R}$.  For $x\in \mathbb{R}$,  
$$C_{-b}'(x)=\frac{3(x^2-1)^2(x^2+b)^2\{28x^4-8(1-b)x^2+(1-b)^2\}}{128x^4\{x^2-\tilde{\xi}^2\}^4},$$ where $\tilde{\xi}$ is the positive square root of $ \frac{1-b}{2}$.
Since all simple critical points of $C_{-b}$ are non-real, $C_{-b}'(x)>0$ for every $x\in \mathbb{R}\setminus \{\pm 1,0,\pm \tilde{\xi}\}$. In particular, $C_{-b}$ is increasing in  $[1,\infty)$.

Also
$$C_{-b}(x)-x=-\frac{11(x^2-1)(x^2+b)(x^2-b_1^2)(x^2-b_2^2)(x^2+b_3^2)}{32x^3(x^2-\tilde{\xi}^2)^3}$$ where $\pm b_1,\pm b_2$ are real and $\pm i b_3$ are purely imaginary extraneous fixed points of $C_{-b}$ (see Lemma~\ref{extraneous-negative-parameter}). It follows from Lemma~\ref{extraneous-negative-parameter} that  $b_1^2, b_2^2, \tilde{\xi}^2<1$ which gives that 
 $C_{-b}(x)<x$ for all $x \in [1,\infty)$.
Now $\{C_{-b}^n(x)\}_{n>0}$ is a decreasing sequence which is bounded below by $1$ for each   $x\in [1, \infty)$. Therefore, $\lim\limits_{n \to \infty}C_{-b}^n(x) = 1$ and hence $[1,\infty)\subseteq \mathcal{A}_{1}$. Figure~\ref{plot C_b} illustrates the case when $b=0.5$.  
  \begin{figure}[h!]
	\begin{subfigure}{.62\textwidth}
		\centering
		\includegraphics[width=0.75\linewidth]{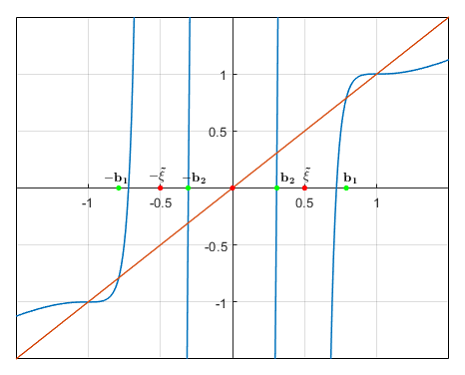}
	\caption{The graph of $C_{- 0.5}$}
	\label{plot C_b}
	\end{subfigure}
	\hspace{-2.250cm}
	\begin{subfigure}{.62\textwidth}
		\centering
		\includegraphics[width=0.75\linewidth]{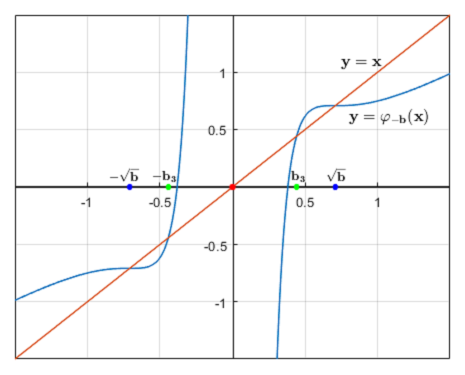}
	\caption{The graph of $\varphi_{-0.5}$}
	\label{-phi}
	\end{subfigure}
	\caption{The graphs of $C_{a} $ and $\varphi_{a}$ for $a=-0.5$}
%	\label{graphs-positive}
\end{figure}
\par      
To show that  $\mathcal{A}_{ i \sqrt{b}} $ is unbounded, first note that $C_{-b}(iy)=i\varphi_{-b}(y)$ for $y\in \mathbb{R}$ where $$\varphi_{-b}(y)=  
\frac{42y^{10}-\tilde{A}y^8+\tilde{B}y^6-\tilde{C}y^4+\tilde{D}y^2-b^2(1-b)}{8 y^3\{2y^2+(1-b)\}^3}
,$$
where $\tilde{A}=-51(1-b)$, $\tilde{B}=4(5b^2-3b+5)$, $\tilde{C}= 3(b^3+7b^2-7b-1)$ and $\tilde{D}=6b(b^2-3b+1)$.
This follows from  Equation \ref{Chebyshev2}. The dynamics of $C_{-b}$ on the imaginary axis is the  same as that of $\varphi_{-b}$ on the real line. The unboundedness of $\mathcal{A}_{\sqrt{b}}$ will be proved by showing that for each $y \in (\sqrt{b}, \infty)$, $\lim\limits_{n \to \infty}\varphi_{-b}^{n}(y)=\sqrt{b}$.

Observe that
$$\varphi_{-b}'(y)=\frac{3(y^2+1)^2(y^2-b)^2\{28y^4+8(1-b)y^2+(1-b)^2\}}{128y^4\{y^2+\tilde{\xi}^2\}^4}.$$ The equation $28y^4+8(1-b)y^2+(1-b)^2=0$ has no real root ( else $y^2$ will be equal to $\frac{-2\pm i\sqrt{3}}{14}(1-b) $ which is not possible). Therefore $\varphi_{-b}'(y)>0$ for every $y\in \mathbb{R}\setminus \{0,\pm \sqrt{b}\}$. Since 
$$\varphi_{-b}(y)-y=-\frac{11(y^2+1)(y^2-b)(y^2+b_1^2)(y^2+b_2^2)(y^2-b_3^2)}{32y^3(y^2+\tilde{\xi}^2)^3}<0,$$ $\varphi_{-b}(y)<y$ for all $y >\sqrt{b}$. Therefore $\{\varphi_{-b}^{n}(y)\}_{n>0}$ is a decreasing sequence which is bounded below by $\sqrt{b}$ and hence $\lim\limits_{n \to \infty}\varphi_{-b}^{n}(y)=\sqrt{b}$ for all $y >\sqrt{b}$ (see Figure \ref{-phi}). 
\end{proof}
\begin{Remark}\label{comp_imm}
Following a similar argument used in the proof of Theorem~\ref{unbdd_imm}, it can also be shown that $\lim\limits_{n \to \infty}C_{-b}^n(x) = 1$ whenever $x\in(b_1,1)$, where $b_1$ is the extraneous fixed point of $C_{-b}$ lying on $(\tilde{\xi},1)$, and $\lim\limits_{n \to \infty}\varphi_{-b}^{n}(y)=\sqrt{b}$ for all $y\in (b_3,\sqrt{b})$, where $b_3$ is the purely imaginary extraneous fixed point of $C_{-b}$ lying on $(0,i\sqrt{b})$. Thus $(b_1, \infty)\subset \mathcal{A}_1$ and $(ib_3, \infty)\subset \mathcal{A}_{i\sqrt{b}}$, where  $(ib_3, \infty)$ is an interval on the imaginary axis. 
\end{Remark}
The next theorem assures the simply connectedness of the immediate basins of the purely imaginary superattracting fixed points of $C_{-b}$. The connectedness of the Julia set is also proved under a condition.
\begin{theorem}
	The immediate basins $\mathcal{A}_{ i \sqrt{b}}$ and $\mathcal{A}_{- i \sqrt{b}}$ are simply connected.
	If there is a non-zero pole on the boundary of any of these immediate basins then the Julia set of $C_{-b}$ is connected.
\end{theorem}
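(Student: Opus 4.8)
The plan is to establish the two assertions in turn, deriving the second from the first. For the simple connectivity of $\mathcal{A}_{\pm i\sqrt{b}}$ I would argue exactly as in Theorem~\ref{atleast-two-scbasins}: by Theorem~3.9~\cite{Milnor}, an immediate basin of a superattracting fixed point is simply connected as soon as it contains no critical point other than that fixed point. By Lemma~\ref{basic-cheby}(2) the critical points of $C_{-b}$ are the four roots of $p_{-b}$, the three poles $0,\pm\tilde{\xi}$, and the four free critical points $\pm c_1,\pm c_2$ with $c_2=\bar{c_1}$, none of the latter lying on a coordinate axis. The poles lie in the Julia set, and the roots other than $i\sqrt{b}$ lie in disjoint basins, so the only critical points that could possibly sit inside $\mathcal{A}_{i\sqrt{b}}$ are the free ones. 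Hence it suffices to show that no free critical point lies in $\mathcal{A}_{i\sqrt{b}}$ or $\mathcal{A}_{-i\sqrt{b}}$.

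To organise this I would use the order-four symmetry group $\{z,-z,\bar{z},-\bar{z}\}$ of $\mathcal{F}(C_{-b})$ and $\mathcal{J}(C_{-b})$: oddness of $C_{-b}$ gives the symmetry $z\mapsto -z$, Lemma~\ref{symmetry} gives $z\mapsto-\bar{z}$, and composing them gives $z\mapsto\bar{z}$. This group acts simply transitively on the four free critical points $\{c_1,\bar{c_1},-c_1,-\bar{c_1}\}$ and permutes the four immediate basins, with $z\mapsto\bar{z}$ fixing $\mathcal{A}_{\pm1}$ and swapping $\mathcal{A}_{\pm i\sqrt{b}}$, while $z\mapsto-\bar{z}$ fixes $\mathcal{A}_{\pm i\sqrt{b}}$ and swaps $\mathcal{A}_{\pm1}$. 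Consequently the four free critical points all lie in the same type of basin: either all in $\mathcal{A}_1\cup\mathcal{A}_{-1}$, or all in $\mathcal{A}_{i\sqrt{b}}\cup\mathcal{A}_{-i\sqrt{b}}$, or all outside the root basins. In the first and third cases $\mathcal{A}_{\pm i\sqrt{b}}$ contain no free critical point and the theorem follows; the only obstruction is the middle case, so everything reduces to showing that the free critical orbits do not converge to $\pm i\sqrt{b}$.

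This localization is the heart of the matter and the step I expect to be hardest. The plan is to track the forward orbit of $c_1$ using the explicit critical value in the Remark following Lemma~\ref{basic-cheby} (specialised to $a=-b$, where one checks $S(-b)<0$ for all $b\in(0,1)$), and to exhibit a forward-invariant region, symmetric about the real axis and contained in $\mathcal{A}_1\cup\mathcal{A}_{-1}$, that captures $C_{-b}(c_1)$; this would place $c_1$ in a real basin and exclude the middle case. The geometry is favourable, since the free critical points are pinned to argument $\tfrac{1}{2}\arctan(\sqrt{3}/2)\approx 20^{\circ}$ off the real axis and have modulus at most $\bigl(\tfrac{1-b}{2\sqrt{7}}\bigr)^{1/2}<1$, so a sector about the real axis is a natural candidate for the capture region. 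The difficulty is uniformity in $b$: the sign of $R(-b)$ changes on $(0,1)$, so the critical value moves with $b$, and the clean monotone real-line dynamics used on the axes in Theorem~\ref{unbdd_imm} is not directly available off the axes; careful two-variable estimates will be needed.

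Granting simple connectivity, the second assertion is routine. The component $\mathcal{A}_{i\sqrt{b}}$ is the invariant immediate basin of $i\sqrt{b}$ and is unbounded by Theorem~\ref{unbdd_imm}; since $\infty$ is a repelling fixed point it lies in $\mathcal{J}(C_{-b})$, whence $\infty\in\partial\mathcal{A}_{i\sqrt{b}}$. A domain in $\widehat{\mathbb{C}}$ is simply connected if and only if its boundary is connected, so $\partial\mathcal{A}_{i\sqrt{b}}$ is a connected subset of $\mathcal{J}(C_{-b})$ containing $\infty$. If a non-zero pole $P$ lies on $\partial\mathcal{A}_{i\sqrt{b}}$, then $P$ and $\infty$ lie in one connected subset of the Julia set and hence in a single Julia component, which is therefore unbounded; Corollary~\ref{connected-JS} then gives that $\mathcal{J}(C_{-b})$ is connected. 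The case of a pole on $\partial\mathcal{A}_{-i\sqrt{b}}$ is identical via $z\mapsto\bar{z}$. In summary, the hard step is the localization of the free critical orbits away from $\pm i\sqrt{b}$; the Milnor reduction, the symmetry bookkeeping, and the passage from simple connectivity to connectedness of the Julia set are all straightforward once that is in hand.
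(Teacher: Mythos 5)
Your proof of the first assertion is incomplete, and the gap is exactly the step you yourself flag as ``the heart of the matter'': excluding the possibility that the four free critical points lie in $\mathcal{A}_{i\sqrt{b}}\cup\mathcal{A}_{-i\sqrt{b}}$. The reduction via Theorem 3.9 of \cite{Milnor} and the symmetry bookkeeping are correct as far as they go, but they only convert the theorem into this localization problem; the proposed solution (a forward-invariant capture region inside $\mathcal{A}_{1}\cup\mathcal{A}_{-1}$ containing $C_{-b}(c_1)$, uniformly in $b\in(0,1)$) is a research plan, not an argument --- no such region is exhibited, no estimates are carried out, and you note yourself that the sign of $R(-b)$ varies with $b$ and that the monotone real-axis dynamics is unavailable off the axes. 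Until that step is executed, the middle case of your trichotomy is not ruled out and nothing is proved. Note also that your route targets a statement stronger than what is needed, and one the theorem does not even imply: a simply connected invariant superattracting basin can in principle contain extra critical points (Riemann--Hurwitz only forces the degree of the restriction to grow), so simple connectivity of $\mathcal{A}_{\pm i\sqrt{b}}$ neither requires nor is known to be equivalent to the absence of free critical points there.

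The paper proves the first assertion with no information about the free critical orbits at all. By Lemma~\ref{preserve-axes}(2), no Fatou component meets both coordinate axes; since $i\sqrt{b}\in\mathcal{A}_{i\sqrt{b}}$, this basin is disjoint from the real axis, while every pole of $C_{-b}$ is real. If $\mathcal{A}_{i\sqrt{b}}$ were multiply connected, the argument of Theorem~\ref{multiply}, applied to the invariant component $\mathcal{A}_{i\sqrt{b}}$ (each of whose forward images is itself), would produce a Jordan curve in $\mathcal{A}_{i\sqrt{b}}$ surrounding a non-zero, hence real, pole; such a curve must cross the real axis, a contradiction. This short argument replaces your entire critical-orbit analysis, which is why the paper can state the result unconditionally. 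Your final paragraph --- unboundedness from Theorem~\ref{unbdd_imm}, connectedness of the boundary of a simply connected domain in $\widehat{\mathbb{C}}$, hence an unbounded Julia component containing the non-zero pole, then Corollary~\ref{connected-JS} --- is correct and is essentially the paper's own derivation of the second assertion.
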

\begin{proof}
	In view of Lemma~\ref{symmetry}, it is enough to prove this theorem for  $\mathcal{A}_{ i \sqrt{b}}$.
The immediate basin  $\mathcal{A}_{ i \sqrt{b}}$ does not intersect the real line by Lemma~\ref{imm axis} and all the poles of $C_{-b}$ are real. It follows from the arguments used in the proof of  Theorem~\ref{multiply} that it is simply connected.
\par The Julia component containing the origin is unbounded by Lemma~\ref{preserve-axes}(3). If there is a non-zero pole on the boundary of  $\mathcal{A}_{ i \sqrt{b}}$ then this non-zero pole is also in the unbounded Julia component. We are now done by Corollary~\ref{connected-JS}. 
\end{proof}
That the symmetry groups of $p_{-b}$ and $C_{-b}$ coincide in some case is now proved.
\begin{theorem}
	If the Fatou set of $C_{-b}$ consists only   of the basins of attraction of the superattracting fixed points of $C_{-b}$ and $b_1 \neq b_3$ where $b_1$ is the largest positive extraneous fixed point  $C_{-b}$ and $ib_3$ is the extraneous fixed point of $C_{-b}$ lying on the imaginary axis  then $\Sigma p_{-b}=\Sigma C_{-b}$.
\end{theorem}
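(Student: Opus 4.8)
The plan is to mirror the positive-parameter argument of Theorem~\ref{symmetry-positive}, the new feature being that here \emph{all four} immediate basins are unbounded (Theorem~\ref{unbdd_imm}), so more work is needed to pin down which rotations can belong to $\Sigma C_{-b}$. Since $p_{-b}(z)=z^4+(b-1)z^2-b$ is in the normal form $p_0(z^2)$ with $\beta=2$, one has $\Sigma p_{-b}=\{z\mapsto\lambda z:\lambda^2=1\}=\{\mathrm{id},\,z\mapsto-z\}$. By Theorem~1.1 of \cite{Sym-and-dyn}, $\Sigma p_{-b}\subseteq\Sigma C_{-b}$ and every element of $\Sigma C_{-b}$ is a rotation about the origin. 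Thus it suffices to take an arbitrary $\sigma(z)=e^{i\phi}z\in\Sigma C_{-b}$ and prove $\phi\in\{0,\pi\}$.

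First I would identify the unbounded Fatou components. Under the standing hypothesis the Fatou set is the union of the basins of the four roots, and since $\infty$ is a repelling fixed point, Lemma~3.2 of \cite{Sym-and-dyn} shows every non-immediate component is bounded; as all four immediate basins $\mathcal{A}_{\pm1},\mathcal{A}_{\pm i\sqrt{b}}$ are unbounded by Theorem~\ref{unbdd_imm}, the unbounded Fatou components are exactly these four. Being a Euclidean isometry preserving $\mathcal{F}(C_{-b})$, $\sigma$ permutes them. I would then record the axis data from Remark~\ref{comp_imm}: $\mathcal{A}_{1}$ and $\mathcal{A}_{-1}$ contain the real rays $(b_1,\infty)$ and $(-\infty,-b_1)$ whose finite endpoints are the repelling extraneous fixed points $\pm b_1$, while $\mathcal{A}_{i\sqrt{b}}$ and $\mathcal{A}_{-i\sqrt{b}}$ contain the imaginary rays $\{iy:y>b_3\}$ and $\{iy:y<-b_3\}$ with endpoints $\pm i b_3$; moreover, by Lemma~\ref{preserve-axes}(2) the two real basins avoid $i\mathbb{R}$ and the two imaginary basins avoid $\mathbb{R}$, and by Lemma~\ref{symmetry} each basin is symmetric about its own axis.

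Next I would constrain $\phi$ to a multiple of $\pi/2$. Near infinity the odd, real-coefficient map satisfies $C_{-b}(z)=\tfrac{21}{32}z\,(1+O(z^{-2}))$, so for $|z|$ large an orbit moves strictly inward with nearly constant argument; this confines the unbounded end of each immediate basin to a cone about its own semi-axis and forces the Julia set to separate the four ``quadrant channels.'' Consequently each basin is unbounded in exactly one of the four semi-axis directions $0,\tfrac{\pi}{2},\pi,\tfrac{3\pi}{2}$, and a rotation permuting the four basins must permute these four directions, so $\phi\in\{0,\tfrac{\pi}{2},\pi,\tfrac{3\pi}{2}\}$. Finally, $b_1\neq b_3$ excludes the two off-diagonal values: if $\phi=\tfrac{\pi}{2}$ then $\sigma$ carries $(b_1,\infty)$ to $\{iy:y>b_1\}$, whence $\sigma(\mathcal{A}_1)=\mathcal{A}_{i\sqrt{b}}$, and since $\sigma$ maps the positive real axis homeomorphically onto the positive imaginary axis it takes the unbounded component $(b_1,\infty)$ of $\mathcal{A}_1\cap\mathbb{R}$ onto the unbounded component $\{iy:y>b_3\}$ of $\mathcal{A}_{i\sqrt{b}}\cap i\mathbb{R}$; matching endpoints gives $b_1=b_3$, a contradiction, and $\phi=\tfrac{3\pi}{2}$ is ruled out identically using $\mathcal{A}_{-i\sqrt{b}}$. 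Therefore $\phi\in\{0,\pi\}$, i.e. $\sigma\in\{\mathrm{id},\,z\mapsto-z\}=\Sigma p_{-b}$, giving $\Sigma C_{-b}=\Sigma p_{-b}$.

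The hard part will be the middle step, namely rigorously establishing that each unbounded basin approaches $\infty$ only along its coordinate semi-axis (equivalently, that $\sigma$ permutes the foot points $\pm b_1,\pm i b_3$). The asymptotic linearity $C_{-b}(z)\sim\tfrac{21}{32}z$ and the reflection symmetries of $\mathcal{J}(C_{-b})$ across both axes (from the real coefficients and Lemma~\ref{symmetry}) are the natural tools, but turning the heuristic ``the channels at infinity lie along the axes'' into a proof that the Julia set genuinely separates them is the delicate point; it is precisely here that the asymmetry $b_1\neq b_3$ between the real and the imaginary extraneous fixed points becomes indispensable, playing the role that the boundedness on $\mathbb{R}$ of $\mathcal{A}_{\pm\sqrt{a}}$ played in the positive-parameter case.
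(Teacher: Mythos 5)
Your proposal is, in its skeleton and in its decisive step, the same as the paper's proof. The paper also (i) invokes Theorem~1.1 of \cite{Sym-and-dyn} to get $\Sigma p_{-b}\subseteq\Sigma C_{-b}$ and that every $\sigma\in\Sigma C_{-b}$ is a rotation about the origin, (ii) uses the argument of Theorem~\ref{symmetry-positive} (Lemma~3.2 of \cite{Sym-and-dyn} plus Theorem~\ref{unbdd_imm}) to conclude that the unbounded Fatou components are exactly the four immediate basins, so $\sigma$ permutes them, and (iii) finishes with the hypothesis $b_1\neq b_3$ exactly as you do: if $\sigma$ carried $\mathcal{A}_1$ onto $\mathcal{A}_{i\sqrt{b}}$, then matching the ray $(b_1,\infty)\subset\mathcal{A}_1$ with the ray $(ib_3,\infty)\subset\mathcal{A}_{i\sqrt{b}}$ from Remark~\ref{comp_imm}, whose finite endpoints are Julia points, would force $b_1=b_3$. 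Indeed your version of (iii), via the unbounded components of $\mathcal{A}_1\cap\mathbb{R}^{+}$ and $\mathcal{A}_{i\sqrt{b}}\cap i\mathbb{R}^{+}$ under the homeomorphism $\sigma$, is spelled out more carefully than the paper's, which states in one sentence that $\sigma(\mathcal{A}_{1})=\mathcal{A}_{i\sqrt{b}}$ implies $\sigma((b_1,\infty))=(ib_3,\infty)$.

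The only place you diverge is the intermediate step in which you try to prove that the rotation angle $\phi$ is a multiple of $\pi/2$ by an asymptotic ``channel'' argument near infinity, and which you honestly flag as unproved. You should know that the paper contains no such step and no substitute for it: its one-sentence implication above, and likewise its final inference that $\sigma(\mathcal{A}_1)\in\{\mathcal{A}_1,\mathcal{A}_{-1}\}$ yields $\sigma\in\{\mathrm{id},\,z\mapsto -z\}$, implicitly assume exactly what you were trying to establish, namely that a rotation carrying one immediate basin onto another must carry the corresponding coordinate semi-axis onto the other. So, measured against the paper, you have not missed an idea the paper supplies; you have isolated a step the paper treats as immediate. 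Two further remarks. First, your asymptotics of $C_{-b}$ at infinity are not really the relevant tool: since $\sigma$ is a rigid rotation, $\sigma((b_1,\infty))$ is the straight ray $\{te^{i\phi}:t>b_1\}$ contained in the single Fatou component $\sigma(\mathcal{A}_1)$, so what must be excluded is that an immediate basin contains a straight ray to infinity off the axes; this is a separation statement about the four disjoint connected basins, the rays they are known to contain, and the symmetries $z\mapsto -z$, $z\mapsto\bar{z}$ (Lemma~\ref{symmetry}), e.g.\ via Jordan-curve arguments in $\widehat{\mathbb{C}}$, not a statement about the dynamics of $C_{-b}$ near $\infty$. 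Second, the easy exclusions you can already make rigorously without any of this are $\phi\neq 0,\pi$ when $\sigma(\mathcal{A}_1)=\mathcal{A}_{\pm i\sqrt{b}}$ (by Lemma~\ref{preserve-axes}(2)) and $\phi\neq 3\pi/2$ with image $\mathcal{A}_{i\sqrt{b}}$ (the ray would lie in $\mathcal{A}_{-i\sqrt{b}}$). Up to that one shared gap, your argument and the paper's coincide.
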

\begin{proof}
	First note that $\Sigma p_{-b} \subseteq \Sigma C_{-b}$ and every element of $\Sigma C_{-b}$ is a rotation about the origin, the centroid of $p_{-b}$ (Theorem~1.1.~\cite{Sym-and-dyn}). 
Following the proof of Theorem~\ref{symmetry-positive}, we get that there are exactly four unbounded components in $\mathcal{F}(C_{-b})$. Therefore, for any $\sigma\in \Sigma C_{-b}$, $\sigma(\mathcal{A}_1)$ is either $\mathcal{A}_{\pm i\sqrt{b}}$ or $\mathcal{A}_{-1}$. From Remark \ref{comp_imm},  we get $(b_1,\infty)\subset \mathcal{A}_1$, whereas, the interval $(ib_3,\infty)$ on the imaginary axis is in $\mathcal{A}_{i\sqrt{b}}$. Thus if $\sigma(\mathcal{A}_{1})=\mathcal{A}_{i\sqrt{b}}$ then $\sigma((b_1,\infty))=(ib_3,\infty)$. As the extraneous fixed points $b_1$ and $ib_3$ are in the Julia set, this can only possible whenever $b_1=b_3$, that contradicts our assumption. Therefore $\sigma(\mathcal{A}_1)\neq \mathcal{A}_{i\sqrt{b}}$. Since two purely imaginary extraneous fixed points are with same modulus, by the similar argument we get $\sigma(\mathcal{A}_1)\neq \mathcal{A}_{-i\sqrt{b}}$. Therefore $\sigma(\mathcal{A}_1)=\mathcal{A}_{-1}$. As $\sigma$ is an arbitrary element in $\Sigma C_{-b}$, we get $\Sigma C_{-b}=\{I,z\mapsto -z\}$.  Since $z \mapsto -z$ is the only non-identity element of $\Sigma p_{-b}$,  $\Sigma C_{-b}  \subseteq \Sigma p_{-b}$.
\end{proof}
The Fatou set of $C_{-0.5}$ is given in Figure~\ref{Quartic_b}. The regions with deep blue, blue, yellow and deep yellow signify the basins of attraction of the four super attracting fixed points of $C_{-0.5}$. The largest region of each colour is the respective immediate basin. The Julia set of $C_{-0.5}$ is the complement of the union of these four basins. Lastly, we provide the following table illustrating some comparisons between the two cases: $a<0$ and $a>0$.
 \begin{table}[h!]
 	\centering
 	\begin{tabular}{c|c}\hline\hline
 		\begin{minipage}{7cm}
 			\centering $a>0$
 		\end{minipage}&\begin{minipage}{7cm}
 			\centering $a<0$
 		\end{minipage}\\\hline\hline
 		\multicolumn{2}{c}{\begin{minipage}{14.5cm}
 				\vspace{0.1cm}The real and imaginary axes are invariant and $\mathcal{J}(C_a)$ is symmetric with respect to both the axes.
 		\end{minipage}}\vspace{0.1cm}\\\hline
 		\multicolumn{2}{c}{\begin{minipage}{14.5cm}
 				\vspace{0.15cm}All roots of $p_a$ and poles of $C_a$ are critical points of $C_a$ with multiplicity two each. There are four other simple critical points of the form $c$, $-c$, $\bar{c}$ and $-\bar{c}$, where $c^2=\frac{(2+i\sqrt{3})(a+1)}{14}$. Thus simple critical points are non-real.
 		\end{minipage}}\vspace{0.1cm}\\\hline\multicolumn{2}{c}{There are six extraneous fixed points.}\\
 		\hline 
 		\begin{minipage}{7cm}
 			All extraneous fixed points are real and repelling.
 		\end{minipage}&\begin{minipage}{7cm}
 			\vspace{0.1cm} Four extraneous fixed points are real and two are purely imaginary. All are repelling.\vspace{0.1cm}
 		\end{minipage}\\
 		\hline
 		\begin{minipage}{7cm}
 			\vspace{0.1cm}Immediate basins of $1$ and $-1$ are unbounded.\vspace{0.1cm}
 		\end{minipage} & \begin{minipage}{7cm}
 			Immediate basins $\mathcal{A}_{\pm 1}$ and $\mathcal{A}_{\pm \sqrt{a}}$ are unbounded.
 		\end{minipage}\\
 		\hline
 		\begin{minipage}{7cm}
 			At least two immediate basins are simply connected.
 		\end{minipage}&\begin{minipage}{7cm}
 			\vspace{0.1cm}At least two immediate basins are simply connected. More precisely, $\mathcal{A}_{\pm \sqrt{a}}$ are simply connected.\vspace{0.1cm}
 		\end{minipage}\\
 		\hline
 		\multicolumn{2}{c}{There is no Herman ring.}\\
 		\hline
 		\multicolumn{2}{c}{There is no invariant Siegel disk.}\\\hline
 	\end{tabular}
 	\caption{Properties of $C_a$}
 	\label{Table}
 \end{table}
\clearpage
\section{Declarations}

\subsection{Funding} The second author is supported by the University Grants Commission, Govt. of India.

\subsection{Conflicts of interest/Competing interests}
Not Applicable.
\subsection{Data Availability statement} Data sharing not applicable to this article as no datasets were generated or analysed during the current study.

\subsection{
	Code availability } Not Applicable

\end{document}